\documentclass[smallextended]{svjour3}       
\usepackage[bottom=4cm, right=4cm, left=4cm, top=4cm]{geometry}
\usepackage{latexsym}
\usepackage{amssymb}
\usepackage{amsmath}
\usepackage[mathscr]{eucal}
\usepackage{graphicx}
\usepackage{hyperref}
\usepackage{caption}
\usepackage{subcaption}
\usepackage{setspace}

\renewcommand{\qed}{\hfill{\ \ \rule{2mm}{2mm}} \vspace{0.2in}}

\newcommand{\ind}{1\hspace{-2.3mm}{1}}

\renewcommand{\thefigure}{\arabic{figure}}
\begin{document}

\title{Maximum Spanning Trees of Random Geometric Graphs With Independent Edge Weights}
\titlerunning{Maximum Spanning Trees of Weighted Random Geometric Graphs}

\author{ \textbf{Ghurumuruhan Ganesan}}
\authorrunning{G. Ganesan}
\institute{IISER Bhopal,\\
\email{gganesan82@gmail.com }}

\date{}
\maketitle


\begin{abstract}
In this paper, we study maximum weight spanning trees of the random geometric graph (RGG)~\(G\) formed by~\(n\) vertices where each edge is independently either open or closed with a certain probability and is also equipped with an independent random positive weight. We use segmentation and iterative path construction to obtain deviation bounds for the order of growth of the maximum weight of a spanning tree  in terms of an inverse of the edge weight complementary cumulative distribution function (ccdf) and also illustrate our results for the special cases of power law and exponential decay. We then use martingale difference methods to individually estimate the variance contribution due to randomness in vertex locations and edge states/weights and determine sufficient conditions for~\(L^2-\)convergence of the maximum weight, appropriately scaled and centred.

\vspace{0.1in} \noindent \textbf{Key words:} Maximum weight spanning trees, Bernoulli random geometric graphs, deviation estimates, variance bounds, ~\(L^2-\)convergence.

\vspace{0.1in} \noindent \textbf{AMS 2000 Subject Classification:} Primary: 60D05, 60C05.
\end{abstract}

\bigskip

\renewcommand{\theequation}{\thesection.\arabic{equation}}
\setcounter{equation}{0}
\section{Introduction} \label{intro}
Spanning trees of graphs equipped with random weights are important from theoretical and application perspectives. Typically, the  weight of an edge is associated with the cost of performing a certain operation on the edge and in this context, minimum weight spanning trees have been extensively studied from the random graph perspective. In case of Bernoulli random graphs where the vertex locations are fixed but edge weights are random,~\cite{frieze} obtained constant limits for the weight of the minimum spanning tree when the cumulative distribution function (cdf) varied linearly near the origin. Later~\cite{aldous} use the local convergence method to obtain bounds for the generic edge weight case and since then many variants have been studied (see for example~\cite{berry} and references therein).

Detailed results regarding deviation, central limit theorems (CLTs) and scaling are also known for minimum spanning trees of Euclidean random graphs where the weight of an edge is proportional to a power~\(\alpha\) of its Euclidean length~\cite{steele,steele2,penrose2,yukich}. The main goal here is to establish how the asymptotics vary with the edge weight exponent~\(\alpha\) and it is known that for all~\(\alpha < 2\) the minimum spanning tree is heavy in the sense that its weight \emph{grows} with the number of vertices~\(n.\) Regarding related results, the paper~\cite{kes_lee} uses martingales to obtain a CLT result for the minimum weight spanning tree and recently in our paper~\cite{ganesan}, we obtain bounds for the minimum weight of spanning trees of random geometric graphs (RGGs) in terms of the adjacency distance.


In this paper, we are interested in studying \emph{maximum weight} spanning trees of RGGs where each edge is independently present with a certain probability and is also equipped with an independent random edge weight. This has applications in wireless networks undergoing independent \emph{shadowing} and \emph{fading} where edge state (present or absent) denotes whether the communication link is operable or not (due to shadowing) and the edge weight denotes the fading gain (we refer to Chapter~\(7,\)~\cite{goldsmith} for more information on related aspects). The maximum weight spanning tree  represents the maximum throughout in a minimally connected network and it is of interest to study how the edge state and weights affect the overall weight of the spanning tree.

We use segmentation and iterative methods to establish deviation estimates for the order of growth of the maximum weight spanning tree.  We then use martingale difference techniques to obtain variance bounds and thereby determine sufficient conditions for~\(L^2-\)convergence of the maximum weight, appropriately scaled and centred. We also illustrate our results using power law and exponential decay as examples.

In the following section, we state and prove our main results regarding deviation and~\(L^2-\)convergence of the maximum weight of spanning trees in random geometric graphs equipped with independent edge weights.

\renewcommand{\theequation}{\thesection.\arabic{equation}}
\setcounter{equation}{0}
\section{Weighted Random Geometric Graphs}\label{sec_mst_max}
Let~\(K_n\) be the complete graph with vertex set~\(\{1,2,\ldots,n\}\) and let~\(\{Z(f)\}_{f \in K_n}\) be independent and identically distributed (i.i.d.) Bernoulli random variables indexed by the edge set of~\(K_n\) and having distribution
\begin{equation}\label{x_dist}
\mathbb{P}(Z(f) = 1) = p_n= 1-\mathbb{P}(Z(f)=0)
\end{equation}
for some~\(0 < p_n  < 1.\) If edge~\(f=(u,v)\) has endvertices~\(u\) and~\(v,\) then we also denote~\(Z(f) = Z(u,v)\) and equip~\(f\) with a positive random weight~\(W(f) = W(u,v)\) that is independent of the edge states~\(\{Z(e)\}_{e \in K_n}.\) The random variables~\(\{W(f)\}_{f \in K_n}\)  are  i.i.d.\ with a common complementary cumulative distribution function (ccdf)~\(F_c\) defined by \[F_c(x) := \mathbb{P}(W(f) > x)\] for~\(x > 0.\)

Let~\(\{X_i\}_{1 \leq i \leq n}\) be i.i.d.\ with a common density~\(f\) in the unit square~\(S = [0,1]^2\) satisfying
\begin{equation}\label{f_eq}
\epsilon_1 \leq f(x) \leq \epsilon_2
\end{equation}
for all~\(x \in S\) and some positive finite constants~\(\epsilon_1,\epsilon_2.\) We define~\(X_u\) to be the random \emph{location} of the vertex~\(u\) and let the Euclidean distance~\(d(X_u,X_v)\) between~\(X_u\) and~\(X_v\) denote the \emph{length} of the edge~\((u,v).\) Let~\(0 < r_n < 1\) be any positive sequence and let~\(G = G(r_n)\) be the random subgraph of~\(K_n\) formed by the set of all edges~\(f = (u,v)\) satisfying
\[d(X_u,X_v) < r_n \text{ and } Z(u,v) = 1.\] We define~\(G\) to be the Bernoulli Random Geometric Graph (BRGG) with adjacency distance~\(r_n\) and edge probability~\(p_n.\)

A connected acyclic subgraph of~\(G\) is a tree. We define the weight~\(W({\cal T})\) of a tree~\({\cal T}\) to be the sum of the weights of edges in~\({\cal T};\) i.e.,~\(W({\cal T}) := \sum_{f \in {\cal T}} W(f).\) We say that a tree~\({\cal T}\) is a \emph{spanning tree} of a component~\({\cal C} \subset G,\) if~\({\cal T}\) contains all the vertices of~\({\cal C}.\)

Letting~\(\tau_n\) denote the maximum weight of a spanning tree of the largest component of~\(G,\) we have the following result. Throughout constants do not depend on~\(n\) and for~\(z > 0\)  we define~
\begin{equation}\label{h_def}
H(z) := \max\left\{x : F_c(x) \geq \frac{1}{z}\right\}.
\end{equation}
\begin{theorem}\label{thm_mst_max} Let~\(\epsilon_1,\epsilon_2\) be as in~(\ref{f_eq}) and suppose~\(nr_n^2p_n \geq M\log{n}\) for
some constant~\(M  >0.\) Also let~\(E_{con}\) denote the event that~\(G\) is connected.\\
\((a)\)  There  are constants~\(\gamma_0,\gamma_1 > 0\) such that if~\(M > \gamma_0,\) then
\begin{equation}\label{low_dev}
\mathbb{P}\left(E_{con} \bigcap \left\{\tau_n \geq \gamma_1 n H\left(\gamma_1nr_n^2p_n\right)\right\} \right) \geq 1-e^{-\gamma_1 nr_n^2p_n}
\end{equation}
and~\(\mathbb{E}\tau_n \geq \gamma_1 n H\left(\gamma_1nr_n^2p_n\right).\)\\
\((b)\) Suppose there are constants~\(C, x_0 > 0\) and~\(s > 2\) strictly, such that
\begin{equation}\label{dilpax}
F_c(ax) \leq \frac{C}{a^{s}}F_c(x)
\end{equation}
all~\(a > 1\) and~\(x > x_0.\) There are constants~\(\gamma_2,\gamma_3 > 0\) such that if~\(M > \gamma_2\) then
\begin{equation}\label{up_dev}
\mathbb{P}\left(E_{con} \bigcap \left\{\tau_n \leq \gamma_2 n H\left(nr_n^2p_n\right)\right\} \right) \geq 1-\frac{1}{(n^2r_n^2p_n)^{\gamma_3}}
\end{equation}
and~\(\mathbb{E}\tau_n \leq \gamma_2nH(nr_n^2p_n).\)
\end{theorem}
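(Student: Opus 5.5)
We split the proof into a connectivity/degree part, a lower bound built by segmentation and iterative path construction, and an upper bound via a union bound on maximal edge weights.

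\emph{Common set-up.} We tile \(S\) into squares of side \(r_n/\sqrt{5}\), so that any two points in the same or in adjacent squares are within distance \(r_n\). There are of order \(1/r_n^2\) squares. By (\ref{f_eq}) and a Chernoff bound, every square contains between \(c_1nr_n^2\) and \(c_2nr_n^2\) vertices with probability at least \(1-\frac{1}{r_n^2}e^{-cnr_n^2}\). We condition on the locations. Within a square, and between adjacent squares, each pair is then an independent open edge with probability \(p_n\). If \(M\) is large, every vertex has \(\Theta(nr_n^2p_n)\) open neighbours in its own square, and the union of these squares is connected. Both facts follow from Chernoff bounds plus a union bound over at most \(n\) vertices and \(1/r_n^2\) square pairs, with failure probability at most \(e^{-\gamma nr_n^2p_n}\) since \(nr_n^2p_n\ge M\log n\). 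This gives \(E_{con}\) with the required probability.

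\emph{Lower bound (a).} We construct a spanning forest greedily. Process the vertices iteratively within each square. Each vertex \(u\) that is not yet attached has at least \(\gamma nr_n^2p_n\) open edges to vertices that have not yet been used, at least while at least half of the square remains unprocessed. The weights on these edges are fresh, so the maximum of \(\gamma nr_n^2p_n\) i.i.d.\ weights exceeds \(H(\gamma_1 nr_n^2p_n)\) with probability at least \(1-(1-1/(\gamma_1 nr_n^2p_n))^{\gamma nr_n^2p_n}\ge 1-e^{-\gamma/\gamma_1}\), which is a constant close to \(1\).

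Arrange this as a matching-type construction: pair \(u\) with a fresh partner and remove both. This yields order \(nr_n^2\) disjoint heavy edges per square. A Chernoff bound over the independent trials shows that a constant fraction of them have weight at least \(H(\gamma_1 nr_n^2p_n)\), with exponential failure probability \(e^{-cnr_n^2}\).

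These disjoint edges form a forest. We extend it to a spanning tree of the connected graph \(G\), and every weight is positive. Hence \(\tau_n\ge \gamma_1 nH(\gamma_1 nr_n^2p_n)\). The expectation bound then follows from \(\mathbb{E}\tau_n\ge \mathbb{E}[\tau_n\ind(E)]\) with \(\mathbb{P}(E)\ge 1/2\), after adjusting \(\gamma_1\).

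\emph{Upper bound (b).} Any spanning tree uses at most \(n-1\) edges, and each edge is incident to a vertex. Root the tree so that each edge is assigned to its child endpoint. Then \(\tau_n\le\sum_{u} \max_{v\sim u} W(u,v)=:\sum_u Y_u\), where \(v\sim u\) ranges over the open neighbours of \(u\), of which there are at most \(D:=c_2 nr_n^2p_n\) on the high-probability event.

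Use the dilation condition (\ref{dilpax}). Write \(h=H(D)\), so that \(F_c(h)\approx 1/D\) up to the jump issue at \(h\); handle this by considering \(F_c(h^-)\) or by a slight perturbation. For \(a>1\) we get \(\mathbb{P}(Y_u>ah)\le D\,F_c(ah)\le C a^{-s}\). With \(s>2\) this gives \(\mathbb{E}Y_u\le Ch\) and \(\mathbb{E}Y_u^2\le Ch^2\). The regular-variation bound also gives \(H(c_2 z)\le C'H(z)\), which takes care of the constant \(c_2\).

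Next we need concentration of \(\sum_u Y_u\). The \(Y_u\) are dependent because they share edges. Decompose instead as a sum over edges of \(W(f)\ind(W(f)>ah)\) plus a truncated part. The truncated part satisfies \(\sum_u \min(Y_u,ah)\le nah\) deterministically. For the excess part, use the independence of the edge weights: the number of edges with \(W(f)>2^k h\) is binomial with mean at most \(n D\,C2^{-ks}/D\). A union bound over dyadic levels \(k\), Markov/Chebyshev estimates, and \(s>2\) give a polynomial tail \((n^2r_n^2p_n)^{-\gamma_3}\). The expectation bound comes from integrating the same tail estimates, which are summable because \(s>2\).

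\emph{Main obstacle.} The hardest step is the upper-bound concentration: the dependence among the \(Y_u\) and the heavy tails only give polynomial rather than exponential bounds. In addition, the lower-bound pairing must keep the fresh weights independent of earlier choices.
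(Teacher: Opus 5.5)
Your proposal is correct in outline. Part \((a)\) is essentially the paper's argument. The paper also tiles \(S\) into squares of side about \(r_n/4\). Inside each square it builds edges that are each the heaviest among a fresh, conditionally binomial family of open edges: a greedy path \(v_1,v_2,\ldots\) rather than your matching. It then runs the same Chernoff bound through successive conditioning. The only difference is assembly. The paper needs every \(G_i\) to be connected, plus a cross edge between consecutive squares, to get \(\tau_n\geq\sum_i\tau_n^{(i)}\). Your vertex-disjoint edges already form a forest of \(G\), which you extend using global connectivity; this is slightly cleaner.

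Part \((b)\) takes a genuinely different route. The paper puts the edges of \(G\) into linear bins \([2jy_n,2(j+1)y_n)\) and applies a Chernoff bound to each bin with \(j\leq (n/(\log n)^2)^{1/s}\). It bounds the number of still heavier edges by \(2(\log n)^3\) through an exponential moment. Finally it caps every edge weight by \(2H((n^2r_n^2p_n)^{1+\epsilon})\) with a union bound. The exponent \(\gamma_3=\epsilon\) comes from that last step, and \(s>2\) is used both for \(\sum_j j^{1-s}<\infty\) and for \((\log n)^3H((n^2r_n^2p_n)^{1+\epsilon})=o(nH(nr_n^2p_n))\).

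Your truncation at \(ah\) replaces all three tiers with a single second-moment estimate. Conditionally on locations and states, \(X=\sum_{f\in G}W(f)\ind(W(f)>ah)\) has mean \(O(nh)\) on the degree event. Its variance there is at most \(m_G\,\mathbb{E}\left(W^2\ind(W>ah)\right)=O(nh^2)\), which is exactly where \(s>2\) is needed. Chebyshev alone then gives failure probability \(O(1/n)\), and \(1/n\leq(n^2r_n^2p_n)^{-1/2}\), so your dyadic union bound is not even necessary. Your route also shows that the expectation bound only requires \(s>1\).

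When you write it out, repair a few details.

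The atom at \(h=H(D)\) is handled by noting \(F_c(2h)<1/D\), which holds because \(2h>H(D)\), and then applying (\ref{dilpax}) from \(2h\), as the paper does with \(2y_n\). Passing to \(F_c(h^-)\) goes the wrong way, since \(F_c(h^-)\geq F_c(h)\).

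Connectivity inside a square does not follow from a minimum-degree bound. You need the usual union bound over vertex subsets, as in (\ref{gilma}).

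In \((a)\), the number of open edges from the current vertex to unused vertices must come from freshness, not from the global degree event: it is conditionally binomial with at least about \(n_i/2\) trials. The Chernoff step must likewise be the conditional version.

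Finally, for \(\mathbb{E}\tau_n\) in \((b)\) you cannot just integrate the tail, because the failure probability of the degree event does not decay in \(t\). Instead bound \(\mathbb{E}\left(\tau_n\ind(E_{nei}^c)\right)\) by \(\binom{n}{2}\,\mathbb{E}W(f)\,\mathbb{P}(E_{nei}^c)\), using that \(E_{nei}\) is independent of the weights.
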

The~\(H(.)\) factors in~(\ref{low_dev}) and~(\ref{up_dev}) could be interpreted as the ``gain" obtained due to the heaviness of the edge weights as opposed to constant edge weights. 





We illustrate Theorem~\ref{thm_mst_max} with two examples below involving power law and exponential decay, respectively.

\underline{\emph{Power Law}}: Suppose~\( \frac{A_1}{x^{s}} \leq  F_c(x) \leq  \frac{A_2}{x^{s}}\) for some constants~\(A_1,A_2 > 0\) and all~\(x\) large. In this case~\(B_1 z^{1/s} \leq H(z) \leq B_2 z^{1/s}\) for some constants~\(B_1,B_2 >0\) and all~\(z\) large. Moreover,~\[F_c(ax) \leq \frac{A_2}{(ax)^{s}}  = \frac{A_2/A_1}{a^{s}} \cdot \frac{A_1}{x^{s}} \leq \frac{A_2/A_1}{a^{s}} \cdot F_c(x)\] for all~\(x\) large and any~\(a > 1.\) Therefore if~\(nr_n^2p_n \geq M\log{n}\) for some large enough constant~\(M,\) then from the deviation bounds in~(\ref{low_dev}) and~(\ref{up_dev}), we get that there are constants~\(\delta_1,\delta_2 > 0\) such that~\(G\) is connected and
\begin{equation}\label{tau_n_power_law}
\delta_1 \left(nr_n^2p_n\right)^{1/s} \leq \frac{\tau_n}{n} \leq \delta_2 (nr_n^2p_n)^{1/s}
\end{equation}
with high probability, i.e., with probability converging to one as~\(n \rightarrow \infty.\)

\underline{\emph{Exponential}}: Suppose~\( e^{-A_1x^{\alpha}} \leq  F_c(x) \leq  e^{-A_2x^{\alpha}}\) for some constants~\(\alpha,A_1,A_2 > 0\) and all~\(x\) large. In this case~\(B_1 (\log{z})^{1/\alpha} \leq H(z) \leq B_2 (\log{z})^{1/\alpha}\) for some constants~\(B_1,B_2 >0\) and all~\(z\) large. For any~\(s > 0, a > 1\) and all~\(x\) large, we have that~\[F_c(ax) \leq e^{-A_2a^{\alpha}x^{\alpha}} \leq \frac{1}{a^{s}} \cdot e^{-A_1x^{\alpha}} \leq \frac{1}{a^{s}} F_c(x).\] Therefore if~\(nr_n^2p_n \geq M\log{n}\) for some large enough constant~\(M,\) then from the deviation bounds in~(\ref{low_dev}) and~(\ref{up_dev}), we get that there are constants~\(\delta_1,\delta_2 > 0\) such that~\(G\) is connected and
\begin{equation}\label{tau_n_exponential}
\delta_1 \left(\log(nr_n^2p_n)\right)^{1/\alpha} \leq \frac{\tau_n}{n} \leq \delta_2 \left(\log(nr_n^2p_n)\right)^{1/\alpha}
\end{equation}
with high probability.









Throughout, we use the following deviation estimate regarding sums of independent Bernoulli random variables. Let~\(\{S_j\}_{1 \leq j \leq r}\) be independent Bernoulli random variables satisfying~\(\mathbb{P}(S_j = 1) = 1-\mathbb{P}(S_j = 0) > 0.\) If~\(T_r := \sum_{j=1}^{r} S_j, \theta_r := \mathbb{E}T_r\) and~\(0 < \gamma \leq \frac{1}{2},\) then
\begin{equation}\label{conc_est_f}
\mathbb{P}\left(\left|T_r - \theta_r\right| \geq \theta_r \gamma \right) \leq 2\exp\left(-\frac{\gamma^2}{4}\theta_r\right)
\end{equation}
for all \(r \geq 1.\) For a proof of~(\ref{conc_est_f}), we refer to Corollary A.1.14, pp. 312 of Alon and Spencer (2008).

\emph{Proof of Theorem~\ref{thm_mst_max}\((a)\)}: We begin by tiling~\(S=[0,1]^2\) into small disjoint~\(\frac{r_n}{4} \times \frac{r_n}{4}\) squares~\(\{R_i\}_{1 \leq i \leq N}\) where we assume for simplicity that~\(N = \frac{16}{r_n^2}\) is an integer; else we choose the side length of~\(R_i\) from the interval~\(\left(\frac{r_n}{5}, \frac{r_n}{4}\right]\) appropriately so that~\(N\) is an integer. This is possible since~\(\left(\frac{5}{r_n}\right)^2 - \left(\frac{4}{r_n}\right)^2 = \frac{9}{r_n^2} \geq 1.\) We label the squares as in Figure~\ref{fig_squares} so that~\(R_i\) and~\(R_{i+1}\) share an edge for each~\(1 \leq i \leq N-1.\) We also say that a vertex~\(u\) belongs to the square~\(R_i\) if the corresponding location~\(X_u \in R_i.\)

\begin{figure}[tbp]
\centering
\includegraphics[width=3in, trim= 20 200 50 110, clip=true]{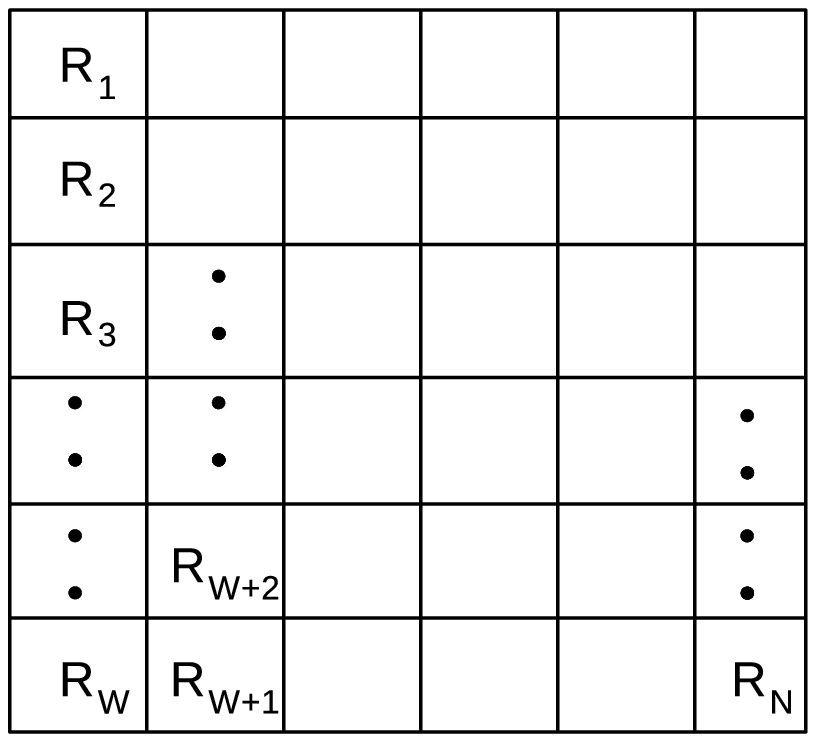}
\caption{Tiling the unit square into~\(N = \frac{n}{A^2}\) smaller~\(\frac{A}{\sqrt{n}} \times \frac{A}{\sqrt{n}}\) squares~\(\{R_l\}_{1 \leq l \leq \frac{n}{A^2}}.\)}
\label{fig_squares}
\end{figure}

The main idea in our proof is that we estimate the maximum weight of a spanning tree in each small square~\(R_i\) and then combine all these subtrees to get an overall spanning tree, thereby obtaining  a lower bound for~\(\tau_n.\)

For future use in estimating the variance  in Theorem~\ref{thm_var_est} later, we establish here connectivity estimates for subgraphs of~\(G\) obtained after removal of  a set of vertices of constant size. Let~\({\cal V} \subset \{1,2,\ldots,n\}\) be set of constant size~\(k = \#{\cal V}\) and let~\(G({\cal V})\) be the subgraph of~\(G\) obtained after removing the vertices from~\({\cal V}.\) Any vertex~\(u \notin {\cal V}\) is present in~\(R_i\) with probability~\(\int_{R_i}f\) and so the number of vertices~\(N(R_i) = N(R_i,{\cal V})\) in~\(R_i\) is Binomially distributed with parameters~\(n-k\) and~\( \int_{R_i}f \in [\epsilon_1 \frac{r_n^2}{16},\epsilon_2 \frac{r_n^2}{16}]\) by the density bounds in~(\ref{f_eq}).

Defining
\begin{equation}\label{e_vert_i_def}
E_{vert}(i,{\cal V}) := \left\{\frac{\epsilon_1 nr_n^2}{32} \leq N(R_i) \leq 2\epsilon_2 nr_n^2  \right\},
\end{equation}
we get from the deviation estimate~(\ref{conc_est_f}) that
\begin{equation}\label{e_vert_i_est}
\mathbb{P}\left(E_{vert}(i,{\cal V})\right) \geq 1- \exp\left(-2C_1 nr_n^2\right)
\end{equation}
for some constant~\(C_1 > 0.\) Here and henceforth constants do not depend on the choice of~\(i\) or~\({\cal V}.\) Further defining~\(E_{vert}  = E_{vert}({\cal V}) := \bigcap_{1 \leq i \leq N} E_{vert}(i,{\cal V}),\) we get from the union bound that
\[\mathbb{P}(E_{vert}) \geq 1-N e^{-2C_1nr_n^2}.\] From Theorem statement~\(nr_n^2 \geq nr_n^2p_n \geq M\log{n}\) and so~\(N = \frac{16}{r_n^2} \leq n\) for all~\(n\) large. Thus
\begin{equation}\label{e_vert_est}
\mathbb{P}(E_{vert}) \geq 1- n \cdot e^{-2C_1nr_n^2} \geq 1- e^{-C_1 nr_n^2}
\end{equation}
for all~\(n\) large.

Let~\(\mathbb{P}_{vert}(.) := \mathbb{P}(.\mid E_{vert})\) be the probability measure conditioned on the occurrence of the event~\(E_{vert}\) and for~\(1 \leq i \leq N,\) let~\(G_i = G_i({\cal V})\) be the induced subgraph of~\(G({\cal V})\) formed by the vertices in~\(R_i.\) In what follows, we estimate the probability of the event~\(E_{con}(i,{\cal V})\) that~\(G_i\) is connected. Let~\(n_i := N(R_i) = N(R_i,{\cal V})\) be the number of vertices of~\({\cal V}^c \subseteq \{1,2,\ldots,n\}\) present in the square~\(R_i.\) By choice, the distance between any two vertices in~\(G_i\) is at most~\(r_n\) and so only the edge states determine the connectivity of~\(G_i.\) Arguing as in Chapter~\(7,\) pp.~\(165,\)~\cite{boll}, we see that if~\(G_i\) is disconnected, then~\(G_i\) contains  a tree~\({\cal L}\) of size~\(3 \leq l \leq \frac{n_i}{2}\) such that no vertex of~\({\cal L}\) is adjacent to any vertex not in~\({\cal L}\) and since there are~\(l^{l-2}\) possible labelled trees containing~\(l\) vertices, we get
\begin{equation}\label{gilma}
\mathbb{P}(E_{con}^c(i,{\cal V}) \mid n_i) \leq \sum_{l=3}^{n_i/2}{ n_i \choose l} l^{l-2} p_n^{l-1} (1-p_n)^{l(n_i-l)}.
\end{equation}
From standard Binomial estimates we have~\({n_i\choose l} \leq \left(\frac{n_i e}{l}\right)^{l}\)  and since~\(l \leq \frac{n_i}{2},\) we also have that  \[(1-p_n)^{l(n_i-l)} \leq e^{-p_nl(n_i-l)} \leq \exp\left(-\frac{p_nln_i}{2}\right).\] Thus
\begin{align}\label{gilpa}
\mathbb{P}(E_{con}^c(i,{\cal V}) \mid n_i) &\leq \sum_{l=3}^{n_i/2}\frac{1}{p_n l^2} \left(en_ip_ne^{-n_ip_n/2}\right)^{l} \nonumber\\
&\leq \frac{1}{p_n} \sum_{l=3}^{n_i/2} \left(en_ip_ne^{-n_ip_n/2}\right)^{l}.
\end{align}

If~\(E_{vert}\) occurs, then~\(n_i  = N(R_i) \geq \frac{\epsilon_1 nr_n^2}{2}\) and so from Theorem statement we see that~\[n_ip_n \geq \frac{\epsilon_1 nr_n^2p_n}{2} \geq \frac{M\epsilon_1 \log{n}}{2}.\] This implies that for all~\(n\) large, \[en_ip_ne^{-n_ip_n/2} \leq e^{-n_ip_n/4} \leq e^{-2C_2nr_n^2p_n}\] for some constant~\(C_2 >0.\) Again using~\(nr_n^2p_n \geq M\log{n}\) we also see that \[\frac{1}{p_n} \leq \frac{nr_n^2}{\log{n}} \leq n.\] Plugging these bounds into~(\ref{gilpa}), we get that
\begin{align}
\mathbb{P}_{vert}(E^c_{con}(i,{\cal V})) &\leq n \sum_{l\geq 3} (e^{-2C_2nr_n^2p_n})^{l} \nonumber\\
&= \frac{n e^{-6C_2nr_n^2p_n}}{1-e^{-2C_2nr_n^2p_n}} \nonumber\\
&\leq 2ne^{-6C_2nr_n^2p_n}
\end{align}
for all~\(n\) large, since~\(nr_n^2p_n \geq M\log{n}\) by Theorem statement. Defining
\[E_{con} = E_{con}({\cal V}) := \bigcap_{1 \leq i \leq N} E_{con}(i,{\cal V}),\]
we then get from the union the bound and the estimate~\(N \leq n\) (see discussion prior to~(\ref{e_vert_est})) that
\begin{align}
\mathbb{P}_{vert}(E_{con}) &\geq 1- 2Nne^{-2C_2nr_n^2p_n} \nonumber\\
&\geq 1- 2n^2e^{-2C_2nr_n^2p_n} \nonumber\\
&\geq 1- e^{-C_2nr_n^2p_n} \label{e_con_est}
\end{align}
since~\(nr_n^2p_n \geq M\log{n}\) by Theorem statement.

Next we look at ``cross edges"  that connect vertices in~\(R_i\) with vertices in the adjacent square~\(R_{i+1}.\) Formally, for~\(1 \leq i \leq N-1\) we say that an edge~\(f = (u,v) \in G({\cal V})\) is an~\(i^{th}-\)cross edge if~\(X_u \in R_i\) and~\(X_v \in R_{i+1}\) or~\(X_v \in R_i\) and~\(X_u \in R_{i+1}.\) Given that~\(E_{vert}\) occurs, there are~\(n_i \geq \frac{\epsilon_1 nr_n^2}{2}\) vertices of~\({\cal V}^c\) each in~\(R_i\) and~\(R_{i+1}\) and so if~\(E_{cross}(i,{\cal V})\) denotes the event that~\(G({\cal V})\) contains an~\(i^{th}-\)cross edge, then
\begin{align}
\mathbb{P}_{vert}(E^c_{cross}(i,{\cal V})) &\leq (1-p_n)^{\left(\frac{\epsilon_1 nr_n^2}{2}\right)^2} \nonumber\\
&\leq \exp\left(-\left(\frac{\epsilon_1 nr_n^2}{2}\right)^2p_n\right) \nonumber\\
&\leq e^{-2nr_n^2p_n},
\end{align}
since~\(nr_n^2 \geq nr_n^2p_n \geq M\log{n}\) by Theorem statement. Defining~\[E_{cross} = E_{cross}({\cal V}) := \bigcap_{1 \leq i \leq N-1}E_{cross}(i,{\cal V})\] and arguing as in~(\ref{e_con_est}) we get that
\begin{equation}\label{e_cross_est}
\mathbb{P}_{vert}(E_{cross}) \geq 1- Ne^{-2nr_n^2p_n} \geq 1-e^{-nr_n^2p_n}
\end{equation}
for all~\(n\) large.

Combining~(\ref{e_cross_est}) with~(\ref{e_con_est}) and using the union bound we see that
\[\mathbb{P}_{vert}(E_{con} \cap E_{cross}) \geq 1-e^{-C_2nr_n^2p_n} - e^{-nr_n^2p_n}.\] Finally defining~\(E_{tot}  = E_{tot}({\cal V}) := E_{vert} \bigcap E_{con} \bigcap E_{cross}\) we get from the estimate~(\ref{e_vert_est}) for~\(E_{vert}\) and the union bound that
\begin{align}\label{e_tot_est}
\mathbb{P}(E_{tot}) &\geq 1-e^{-C_1nr_n^2} - e^{-C_2nr_n^2p_n} - e^{-nr_n^2p_n} \nonumber\\
&\geq 1-e^{-C_3nr_n^2p_n}
\end{align}
for all~\(n\) large and some constant~\(C_3 > 0.\)

We assume henceforth that~\({\cal V} = \emptyset\) and that~\(E_{tot}\) occurs so that for each~\(1 \leq i \leq N,\) the subgraph~\(G_i \subset G\) is connected.  Let~\({\cal T}_i\) be the maximum weight spanning tree of~\(G_i\) and for~\(1 \leq i \leq N-1\) let~\(f_i\) be any~\(i^{th}-\)cross edge whose existence is guaranteed by the occurrence of~\(E_{cross} \supset E_{tot}.\) The union \[{\cal T}_{tot} := \bigcup_{i=1}^{N} {\cal T}_i \bigcup \bigcup_{i=1}^{N-1} \{f_i\}\] is a spanning tree of the overall graph~\(G\) and so if~\(\tau_n(i)\) is the maximum weight of a spanning tree of~\(G_i,\) we get that
\begin{equation}\label{neha_tits}
\tau_n \geq \sum_{i=1}^{N} \tau_n^{(i)}.
\end{equation}

For each~\(i,\) we now estimate~\(\tau_n^{(i)}\) ``indirectly" as follows. We recall that~\(n_i = N(R_i)\) is the number of vertices present in~\(R_i\) and so we let~\[{\cal U}_i := \{u(1),\ldots,u(n_i)\} \subset \{1,2,\ldots,n\}\] be the random vertices present in~\(R_i.\) As before~\(G_i\) is the induced subgraph of~\(G\) with vertex set~\({\cal U}_i\) and for a vertex~\(v \in {\cal U}_i\) let~\({\cal N}_i(v)\) be the set of neighbours of~\(v\) in~\(G_i.\) By our choice of the side length of~\(\frac{r_n}{4}\) for the square~\(R_i,\) any two vertices in~\({\cal U}_i\) are within a distance of~\(r_n\) from each other and so for each~\(v,\) the neighbour set~\({\cal N}_i(v)\) depends solely on the state of edges having~\(v\) as one endvertex and having the other endvertex in~\({\cal U}_i \setminus \{v\}.\)

Our strategy of estimating~\(\tau_n^{(i)}\) is as follows. We first construct a path~\({\cal P}_i  \subset G_i\) containing enough heavy edges and obtain a lower bound for the total weight of~\({\cal P}(i).\) We then argue that if~\(G_i\) is connected, then~\({\cal P}_i\) is a subset of a spanning tree of~\(G_i\) and thereby obtain an indirect lower bound for~\(\tau_n^{(i)}.\) Details follow. For notational simplicity, we suppress the dependence on~\(i\)  on all terms that follow except for those already mentioned before.

Let~\(v_{1} := u(1)\) and for~\(j \geq 1\) let~\(v_{j+1}\) be the neighbour of~\(v_{j}\) in~\(G_i\) satisfying
\begin{equation}\label{wj_def}
W_j := W(v_{j},v_{j+1}) = \max_{v \in {\cal N}_i(v_j) \setminus \{v_1,\ldots,v_{j-1}\}} W(v_j,v)
\end{equation}
with the notation that~\(\{v_1,\ldots,v_{j-1}\} = \emptyset\) for~\(j=1.\) In words, among all neighbours of~\(v_j\) not encountered so far, we choose that edge with the largest weight. Let~\(L \leq n_i \leq n\) be the smallest integer such that~\({\cal N}(v_L) \subset \{v_1,\ldots,v_{L-1}\}\) and define the path~\({\cal P}_i  := (v_1,\ldots,v_L).\) For completeness, we also set~\(W_{j} :=0\) and~\(v_j := v_L\) for~\(L \leq j \leq n-1.\)

We estimate the weight of~\({\cal P}_i\) (i.e., the sum of weights of edges in~\({\cal P}_i\)) as follows. Let~\(N_j(v_j)\) be the number of neighbours of~\(v_j\) in~\({\cal U}_i \setminus \{v_1,\ldots,v_{j-1}\}.\) As mentioned before,~\(N_j(v_j)\) depends only the state of edges and not on the individual vertex locations and so given the ``history"~\({\cal U}_i \cup \{v_1,\ldots,v_{j}\},\) the term~\(N_j(v_j)\) is binomially distributed with parameters~\(n_i-j\) and~\(p_n.\) Therefore defining~\({\cal G}_j\) to be the sigma-field generated by~\({\cal U}_i \cup \{v_1,\ldots,v_{j}\},\) we see that~\(\mathbb{E}(N_j(v_j) \mid {\cal G}_j) = (n_i-j)p_n\) and so denoting~\[E_j(v_j) := \left\{N_j(v_j) \geq \frac{(n_i-j)p_n}{2}\right\},\] we get from the deviation estimate~(\ref{conc_est_f}) that
\begin{equation}\label{ej_vj_est}
\mathbb{P}\left(E_j(v_j) \mid {\cal G}_j\right) \geq 1-\exp\left(-\frac{(n_i-j)p_n}{16}\right).
\end{equation}

If~\(E_j(v_j)\) occurs, then the weight~\(W_j\) is the maximum of at least~\(\frac{(n_i-j)p_n}{2}\) i.i.d.\ random variables, each with ccdf~\(F_c\) and so
\begin{align}
\mathbb{P}\left(\{W_j \leq y\} \bigcap E_j(v_j) \mid {\cal G}_j\right) &\leq (1-F_c(y))^{(n_i-j)p_n/2} \nonumber\\
&\leq \exp\left(-F_c(y)\frac{(n_i-j)p_n}{2}\right). \label{wj_y_est}
\end{align}
Recalling the event~\(E_{vert}(i)\) defined in~(\ref{e_vert_i_def}), we now suppose that~\(E_{vert}(i)\) also occurs so that~\(n_i =N(R_i) \geq \frac{\epsilon_1 nr_n^2}{2}.\) By definition~\(E_{vert}(i)\) depends only the vertices in~\({\cal U}_i\) and therefore belongs to~\({\cal G}_j.\) Moreover, by Theorem statement we also get that~\[p_n \geq \frac{M\log{n}}{nr_n^2} \geq \frac{M\epsilon_1 \log{n}}{2n_i}.\]

Based on the definition of~\(H(.)\) in~(\ref{h_def}) we now set~\[y := \frac{1}{2}H\left(\frac{(n_i-j)p_n}{4}\right)\] and get that
\begin{equation}\label{daldaps2}
1 \geq F_c(y) \geq \frac{4}{(n_i-j)p_n}
\end{equation}
for all~\(1 \leq j \leq n_i-\frac{1}{p_n},\) since~\(n_ip_n\) is at least of the order of than~\(\log{n}.\) For~\(j \leq \frac{3n_i}{4}\) we also have that~\((n_i-j)p_n \geq \frac{n_ip_n}{4}\) and so using the property that~\(H(z)\) is non-decreasing in~\(z,\) we get
\begin{equation}\label{daldaps}
H\left(\frac{(n_i-j)p_n}{4}\right) \geq H\left(\frac{n_ip_n}{16}\right) \geq H\left(\frac{3\epsilon_1nr_n^2p_n}{32}\right) =: 2z_n,
\end{equation}
since~\(E_{vert}(i)\) occurs and so~\(n_i \geq \frac{\epsilon_1nr_n^2}{2}.\)

Plugging~(\ref{daldaps}) and~(\ref{daldaps2}) into~(\ref{wj_y_est}) we then get
\begin{equation}\label{daldaps3}
\mathbb{P}\left(\left\{W_j \leq z_n\right\} \bigcap E_j(v_j) \mid {\cal G}_j\right) \ind(E_{vert}(i)) \leq e^{-2} \ind(E_{vert}(i))
\end{equation}
for all~\(1 \leq j \leq \frac{3n_i}{4},\) where~\(\frac{3n_i}{4} \geq \frac{3\epsilon_1nr_n^2}{8} =:L_0,\) since~\(E_{vert}(i)\) occurs. This in turn implies that
\begin{align}\label{daldaps4}
&\mathbb{P}\left(W_j \leq z_n \mid {\cal G}_j\right) \ind(E_{vert}(i)) \nonumber\\
&\;\;\;\;\;=\;\;\mathbb{P}\left(\left\{W_j \leq z_n\right\} \bigcap E_j(v_j) \mid {\cal G}_j\right) \ind(E_{vert}(i)) \nonumber\\
&\;\;\;\;\;\;\;\;\;\;\;\;\;\;\;\;\;\;\;\;\; + \;\;\;\mathbb{P}\left(\left\{W_j \leq z_n\right\} \bigcap E^c_j(v_j) \mid {\cal G}_j\right) \ind(E_{vert}(i)) \nonumber\\
&\;\;\;\leq\;\;e^{-2}\ind(E_{vert}(i))  + \mathbb{P}\left(E^c_j(v_j) \mid {\cal G}_j\right) \ind(E_{vert}(i))
\end{align}
for all~\(1 \leq j \leq L_0.\)

From the estimate~(\ref{ej_vj_est}) for~\(E_j(v_j),\) we see that
\begin{align}
\mathbb{P}\left(E^c_j(v_j) \mid {\cal G}_j\right) \ind(E_{vert}(i)) &\leq \exp\left(-\frac{(n_i-j)p_n}{16}\right) \ind(E_{vert}(i)) \nonumber\\
&\leq \exp\left(-\frac{n_ip_n}{64}\right)\ind(E_{vert}(i)) \nonumber\\
&\leq \exp\left(-\frac{\epsilon_1 nr_n^2p_n}{128}\right)\ind(E_{vert}(i)) \label{neha_tits_two}
\end{align}
for all~\(1 \leq j \leq L_0 \leq \frac{3n_i}{4},\) where the second expression in~(\ref{neha_tits_two}) is true since~\(n_i-j \geq \frac{n_i}{4}\) and the final relation in~(\ref{neha_tits_two}) is true since~\(E_{vert}(i)\) occurs and so~\(n_i \geq \frac{\epsilon_1nr_n^2}{2}.\) From Theorem statement we know that~\(nr_n^2p_n \geq M\log{n}\) and so choosing~\(M\) large enough, we get from~(\ref{neha_tits_two}) and~(\ref{daldaps4}) that
\begin{equation}\label{jyothi_tits}
\mathbb{P}\left(W_j \leq z_n \mid {\cal G}_j \right) \ind(E_{vert}(i)) \leq 2e^{-2} \ind(E_{vert}(i))
\end{equation}
for all~\(n\) large and all~\(1 \leq j \leq L_0.\)

For~\(1 \leq j \leq L_0,\) we  define~\(Y(j) := \ind(W_j \geq z_n)\) so that~\[Q(j) := \sum_{l=1}^{j}Y(l)\] is the number of edges amongst the first~\(j\) edges of the path~\({\cal P}_i,\) each of whose weight is at least~\(z_n.\)  Letting~\(\theta,\lambda> 0\) be constants to be determined later, we use the Chernoff bound to obtain for~\(1 \leq j \leq L_0\) that
\begin{equation}\label{blue_zero}
\mathbb{P}\left(Q(j) \leq \theta j\right)  \leq e^{\lambda\theta j} \mathbb{E}\exp\left(-\lambda Q(j)\right).
\end{equation}
We have that
\begin{align}
\mathbb{E}\exp\left(-\lambda Q(j)\right) &= \mathbb{E}e^{-\lambda Q(j)} \ind(E_{vert}(i)) + \mathbb{E}e^{-\lambda Q(j)}\ind(E^c_{vert}(i)) \nonumber\\
&\leq \mathbb{E}e^{-\lambda Q(j)} \ind(E_{vert}(i)) + \mathbb{P}(E_{vert}^c(i)) \nonumber\\
&\leq \mathbb{E}e^{-\lambda Q(j)} \ind(E_{vert}(i)) + e^{-4Cnr_n^2}\label{blue_one}
\end{align}
for some constant~\(C > 0\) not depending on the choice of~\(i\) or~\(j,\) by~(\ref{e_vert_i_est}).

If~\({\cal H}_j\) is the sigma-field generated by the union of the sigma-field~\({\cal G}_j\) and the weights of the edges of~\({\cal P}_i\) having at least one endvertex in~\(\{v_1,\ldots,v_{j-1}\},\) then~\(Y(l)\) is~\({\cal H}_j-\)measurable for each~\(1 \leq l \leq j-1\) and so
\begin{equation}\label{red_one}
\mathbb{E}e^{-\lambda Q(j)} \ind(E_{vert}(i)) = \mathbb{E}e^{-\lambda Q(j-1)} \ind(E_{vert}(i))\mathbb{E}\left(e^{-\lambda Y(j)} \mid {\cal H}_j\right)
\end{equation}
since~\(E_{vert}(i) \in {\cal G}_j \subset {\cal H}_j\) as well. Taking expectations with respect to~\({\cal G}_j\) in~(\ref{jyothi_tits}) we see that~(\ref{jyothi_tits}) holds with~\({\cal G}_j\) replaced by~\({\cal H}_j\) and so
\begin{align}
\ind(E_{vert}(i))\mathbb{E}\left(e^{-\lambda Y(j)} \mid {\cal H}_j\right) &= \ind(E_{vert}(i))e^{-\lambda}\mathbb{P}\left(W_j \geq z_n \mid {\cal H}_j\right)  \nonumber\\
&\;\;\;\;\;\;\;\;\;\;\;\;\;\;\;+\;\;\;\ind(E_{vert}(i)) \mathbb{P}\left(W_j \leq z_n \mid {\cal H}_j\right) \nonumber\\
&\leq \ind(E_{vert}(i))(e^{-\lambda}  + 2e^{-2}) \nonumber\\
&\leq 3e^{-2}\ind(E_{vert}(i))\label{blue_two}
\end{align}
provided we set~\(\lambda= 3.\)

Plugging~(\ref{blue_two}) into~(\ref{red_one}), we get that
\[\mathbb{E}e^{-\lambda Q(j)}\ind(E_{vert}(i)) \leq 3e^{-2}\mathbb{E}e^{-\lambda Q(j-1)}\ind(E_{vert}(i))\]
for all~\(1 \leq j \leq L_0\) and proceeding iteratively, we get that
\[\mathbb{E}e^{-\lambda Q(j)} \ind(E_{vert}(i)) \leq (3e^{-2})^{j}.\] Substituting this into~(\ref{blue_one}) and using~(\ref{blue_zero}), we get that
\begin{equation}\label{gizmo}
\mathbb{P}\left(Q(j) \leq \theta j\right) \leq e^{\lambda \theta j} \left((3e^{-2})^{j} + e^{-4Cnr_n^2}\right)
\end{equation}
for all~\(1 \leq j \leq L_0 = \frac{3\epsilon_1 nr_n^2}{8}.\) Since~\(3e^{-2} < 1\) strictly, we can choose~\(C > 0\) smaller if necessary so that~\[(3e^{-2})^{L_0} + e^{-4Cnr_n^2} = (3e^{-2})^{3\epsilon_1nr_n^2/8} + e^{-4Cnr_n^2} \leq e^{-2Cnr_n^2}\] and so setting~\(j=L_0\) in~(\ref{gizmo}) and recalling that~\(\lambda = 3\) we see that
\begin{equation}\label{gizmo_two}
\mathbb{P}\left(Q(L_0)\leq \theta L_0 \right) \leq e^{\lambda \theta L_0} e^{-2Cnr_n^2} = e^{3\theta L_0} e^{-2Cnr_n^2} \leq e^{-Cnr_n^2},
\end{equation}
provided we choose~\(\theta > 0\) small enough.

We recall from the discussion prior to~(\ref{blue_zero}) that~\(Q(L_0) = Q(L_0,i)\) is a lower bound on the number of edges in the path~\({\cal P}_i,\) each of whose weight is at least~\(z_n.\)  Therefore if~\(Q(L_0,i) \geq \theta L_0,\)  then the weight of the path~\({\cal P}_i\) (contained within the square~\(R_i\)) is at least~\[\theta L_0 z_n \geq  \theta \frac{3\epsilon_1 nr_n^2}{8} \cdot z_n.\] To use this bound for estimating the overall spanning tree of the whole graph~\(G,\) we recall the event~\(E_{tot}\) described prior to~(\ref{e_tot_est}). From~(\ref{gizmo_two}) and the union bound, we see that the joint event~\[E_{overall} :=E_{tot} \bigcap  \bigcap_{1 \leq i \leq N} \{Q(L_0,i) \geq \theta L_0\}\] occurs with probability
\begin{align}\label{jyothi_tits_two}
\mathbb{P}(E_{tot} \cap E_{wt}) &\geq 1-Ne^{-Cnr_n^2}-e^{-2C_1nr_n^2p_n} \nonumber\\
&\geq 1-e^{-C_1nr_n^2p_n}
\end{align}
for some constant~\(C_1 > 0,\) provided~\(nr_n^2p_n \geq M\log{n}\) for some large enough~\(M;\) indeed, the final estimate in~(\ref{jyothi_tits_two}) is true since~\[N= \frac{16}{r_n^2} \leq \frac{16np_n}{M\log{n}}\] by Theorem statement and so~\(N  \leq n\) provided~\( M > 0\) is large enough.

We fix such an~\(M\) henceforth and suppose that~\(E_{overall}\) occurs. From the discussion in the previous paragraph we recall that the weight of the path~\({\cal P}_i\) is at least~\(C_2 nr_n^2 z_n\) for some constant~\(C_2 > 0.\) Because~\(E_{con}(i) \supset E_{tot}\) also occurs, the graph~\(G_i\) is connected and so~\({\cal P}_i\) is  in fact a subset of a spanning tree of~\(G_i.\) This  in turn implies that the maximum weight~\(\tau_n^{(i)}\) of a spanning tree of~\(G_i\) satisfies~\[\tau_n^{(i)} \geq C_2 nr_n^2 z_n\] for each~\(1 \leq i \leq N = \frac{16}{r_n^2}.\) Substituting this into~(\ref{neha_tits}), we then get that
\begin{equation}\label{tau_nn_ax}
\tau_n \geq 16C_2 nz_n =  8C_2n H\left(\frac{3\epsilon_1 nr_n^2p_n}{8}\right).
\end{equation}
Finally, combining~(\ref{tau_nn_ax}) with~(\ref{jyothi_tits_two})  we obtain the lower deviation bound in~(\ref{low_dev}) and this completes the proof of Theorem~\ref{thm_mst_max}\((a).\)~\(\qed\)



\emph{Proof of Theorem~\ref{thm_mst_max}\((b)\)}: We obtain the upper bound for~\(\mathbb{E}\tau_n\) by segmenting the weights into distinct bins and explicitly counting the number of edges in each bin. We begin with some preliminary computations regarding the number of neighbours per vertex in~\(G.\) For~\(1 \leq i \leq n\) let~\(N(i)\) be the number of neighbours of the vertex~\(i\) in the graph~\(G.\) Given the vertex location~\(X_i = x,\) a vertex~\(j\) is a neighbour of~\(i\) if and only if~\(X_j \in B(x,r_n),\) the ball of radius~\(r_n\) centred at~\(x\) and the edge state~\(Z(i,j)=1.\) Thus~\(j\) is a neighbour of~\(i\) with probability
\begin{align}
&\mathbb{P}(\{X_i \in B(x,r_n)\} \bigcap \{Z(i,j)=1\}) \nonumber\\
&\;\;\;=\;\;\mathbb{P}(X_j \in B(x,r_n)) \mathbb{P}(Z_{i,j}=1) \nonumber\\
&\;\;\;=\;\;p_n\int_{B(x,r_n)}f \nonumber
\end{align}
where we recall that~\(f\) is the common density of the vertex locations. Thus~\(N(i)\) is Binomially distributed with parameters~\(n-1\) and~\(p_n \int_{B(x,r_n)}f.\)

For any point~\(x\) in the unit square~\(S,\) the area of~\(B(x,r_n)\) is at least~\[\frac{\pi x^2}{4} \geq \frac{x^2}{2}\] and at most~\(\pi x^2 \leq 4x^2.\) Therefore using the density bounds in~(\ref{f_eq}), we get that
\[ \frac{\epsilon_1 r_n^2p_n}{2}  \leq  p_n\int_{B(x,r_n)} f \leq 4\epsilon_2 r_n^2p_n\] and so defining
\[E_{nei}(i) := \left\{\frac{\epsilon_1 nr_n^2p_n}{4} \leq N(i) \leq 8\epsilon_2 nr_n^2p_n\right\},\] we get from the standard deviation estimate~(\ref{conc_est_f}) that
\[\mathbb{P}\left(  E_{nei}(i) \mid X_i=x\right) \geq 1- e^{-2C_1nr_n^2p_n} \]
for some constant~\(C_1 > 0\) not depending on the choice of~\(i\) or~\(x.\) Averaging over~\(X_i\) and using Fubini's theorem, we get
\[\mathbb{P}\left(E_{nei}(i)\right) \geq 1- e^{-2C_1nr_n^2p_n}\] and so defining~\[E_{nei} := \bigcap_{i=1}^{n} E_{nei}(i)\] and applying the union bound we obtain
\begin{equation}\label{e_nei_est}
\mathbb{P}(E_{nei}) \geq 1-ne^{-2C_1nr_n^2p_n} \geq 1- e^{-C_1nr_n^2p_n}
\end{equation}
for all~\(n\) large, provided~\(nr_n^2p_n \geq M\log{n}\) and~\(M\) is a large enough constant. We fix such an~\(M\) henceforth.

Letting~\[\mathbb{P}_{nei}(.) := \mathbb{P}(. \mid E_{nei})\] be the probability measure conditioned on the occurrence of~\(E_{nei},\) we obtain an upper bound for~\(\mathbb{E}\tau_n\) as follows. Let~\({\cal T}_n\) be the maximum weight spanning tree of the largest component of~\(G.\) The total weight of edges in~\({\cal T}_n\) each having weight at most~\(2H(nr_n^2p_n) =: 2y_n\) is at most~\(2(n-1)y_n \leq 2ny_n.\)

Next for integer~\(j \geq 1\) say that an edge~\(f \in G\) is~\(j-\)\emph{bad} if the edge weight~\[W(f)  \in [2jy_n,2(j+1)y_n)\] and let~\(N_{bad}(j)\) be the number of~\(j-\)bad edges of~\(G.\) The total weight of~\(j-\)bad edges is at most~\(2(j+1)y_n \leq 4jy_n\) and so we get that
\begin{equation}\label{tau_later}
\tau_n \leq 2ny_n + 4y_n\sum_{j \geq 1}jN_{bad}(j).
\end{equation}
Taking expectations with respect to~\(\mathbb{P}_{nei},\) we obtain the upper bound
\begin{equation}\label{e_tau_n_up}
\mathbb{E}_{nei}\tau_n \leq 2ny_n + 4y_n \sum_{j \geq 1}j\mathbb{E}_{nei}(N_{bad}(j).
\end{equation}

Using the scaling relation~(\ref{dilpax}) in Theorem statement with~\(a=j\) and~\(x = y_n\) we obtain for any edge~\(f \in K_n\) that
\begin{align}\label{betty_gilpin}
\mathbb{P}_{nei}(W(f) \geq 2jy_n) &\leq \frac{C}{j^{s}} \mathbb{P}(W(f) \geq 2y_n) \nonumber\\
&\leq \frac{C}{j^{s}} \cdot \frac{1}{nr_n^2p_n},
\end{align}
by definition of~\(H(.)\) in~(\ref{h_def}). The occurrence of~\(E_{nei}\) implies that there are at most~\(8\epsilon_2nr_n^2p_n\) neighbours for each vertex and since the sum of vertex degrees is twice the number of edges, we see that~\(G\) has at most~\(8\epsilon_2n^2r_n^2p_n\) edges. From~(\ref{betty_gilpin}), we then get that
\begin{equation}\label{e_nei_bad}
\mathbb{E}_{nei}(N_{bad}(j)) \leq 8\epsilon_2n^2r_n^2p_n \cdot \frac{C}{j^{s}} \cdot \frac{1}{nr_n^2p_n} = \frac{8C\epsilon_2n}{j^{s}}.
\end{equation} Plugging this into~(\ref{e_tau_n_up}), we get
\begin{align}
\mathbb{E}_{nei}\tau_n &\leq 2ny_n + 4y_n\sum_{j \geq 1} j \cdot \frac{8C\epsilon_2n}{j^{s}} \nonumber\\
&= 2ny_n + 32C\epsilon_2 ny_n \sum_{j \geq 1}\frac{1}{j^{s-1}} \nonumber\\
&\leq 2 ny_n + D_1 ny_n \label{one_part}
\end{align}
for some constant~\(D_1 > 0,\) since~\(s > 2\) strictly.

From~(\ref{one_part}) and the estimate~(\ref{e_nei_est}) for~\(E_{nei},\) we therefore get that
\begin{align}
\mathbb{E}\tau_n &= \mathbb{E}(\tau_n \mid E_{nei})\mathbb{P}(E_{nei}) + \mathbb{E}(\tau_n \mid E^c_{nei}) \mathbb{P}(E^c_{nei}) \nonumber\\
&\leq  \mathbb{E}(\tau_n \mid E_{nei}) + \mathbb{E}(\tau_n \mid E^c_{nei}) e^{-C_1 nr_n^2p_n} \nonumber\\
&= \mathbb{E}_{nei}\tau_n + \mathbb{E}(\tau_n \mid E^c_{nei}) e^{-C_1 nr_n^2p_n} \nonumber\\
&\leq (D_1+2)ny_n +  \mathbb{E}(\tau_n \mid E^c_{nei}) e^{-C_1 nr_n^2p_n} \label{e_tau_n_tot}
\end{align}
by~(\ref{one_part}). If the event~\(E^c_{nei}\) occurs, then we use the trivial upper bound~\[\tau_n \leq \sum_{f \in K_n} W(f),\] the sum of weights of all edges of the complete graph~\(K_n\) to get that
\begin{align}
\mathbb{E}( \tau_n \mid E_{nei}^c) &\leq \mathbb{E}\left(\sum_{f \in K_n} W(f) \mid E_{nei}^c\right) \nonumber\\
&= \mathbb{E} \sum_{f \in K_n} W(f) \label{inter_med}
\end{align}
since the event~\(E_{nei}\) depends only on the vertex locations and is therefore independent of the  edge weights.

Using the scaling relation~(\ref{dilpax})  with~\(x = 2x_0,\) we see that \[\mathbb{P}(W(f) \geq 2x_0a) \leq \frac{C}{a^s}\] for all~\(a > 1\) and so the edge weight~\(W(f)\) has finite expectation. Since there are~\({n \choose 2} \leq n^2\) edges in~\(K_n\) the final expression in~(\ref{inter_med}) is at most~\(D_2n^2\) for some constant~\(D_2 > 0\) and plugging this into~(\ref{e_tau_n_tot}), we get
\[\mathbb{E}\tau_n \leq (D_1+2)ny_n + D_2n^2e^{-C_1nr_n^2p_n}.\] Since the edge weights are bounded away from zero, we know that~\(y_n \geq \epsilon_0\) for some constant~\(\epsilon_0 > 0\) and so if~\(nr_n^2p_n \geq M\log{n}\) for some large enough constant~\(M > 0\) then we see that
\begin{equation}\label{e_tau_n_ax}
\mathbb{E}\tau_n \leq (D_1+2)ny_n + D_2n^2 \cdot \frac{1}{n^2} \leq D_3 ny_n = D_3 nH(nr_n^2p_n)
\end{equation}
for some constant~\(D_3 > 0.\) This obtains the desired expectation upper bound for~\(\mathbb{E}\tau_n.\)

We obtain the upper deviation bound for~\(\tau_n\) as follows: If~\(E_{nei}\) occurs, then as argued in the discussion following~(\ref{betty_gilpin}), we see that the number of edges in~\(G\) is at most~\(8\epsilon_2n^2r_n^2p_n.\) Therefore from~(\ref{betty_gilpin}), we see that the number of~\(j-\)bad edges~\(N_{bad}(j)\) is stochastically dominated from above by a Binomial random variable with parameters~\(8\epsilon_2n^2r_n^2p_n\) and~\(\frac{C}{j^{s}} \cdot \frac{1}{nr_n^2p_n}.\) Using the deviation estimate~(\ref{conc_est_f}), we then get that
\[\mathbb{P}_{nei}\left(N_{bad}(j) \geq \frac{C_1 n}{j^{s}}\right) \leq \exp\left(-\frac{2C_2n}{j^{s}}\right)\]
for some constants~\(C_1,C_2 > 0,\) not depending on the choice of~\(j.\)

If~\( j \leq \left(\frac{n}{(\log{n})^2}\right)^{1/s} =: w_n\) then
\[\mathbb{P}_{nei}\left(N_{bad}(j) \geq \frac{C_1n}{j^s}\right) \leq e^{-2C_2(\log{n})^2}\] and so defining
\[E_{low} := \bigcap_{j=1}^{w_n} \left\{N_{bad}(j) \leq \frac{C_1n}{j^{s}}\right\},\] we get from the union bound that
\begin{equation}\label{e_low_est}
\mathbb{P}_{nei}(E_{low}) \geq 1-w_n \cdot e^{-2C_2(\log{n})^2} \geq 1-e^{-C_2(\log{n})^2}
\end{equation}
for all~\(n\) large since~\(w_n \leq n.\) If~\(E_{low}\) occurs, then recalling the notation~\(y_n = H(nr_n^2p_n),\) we see that the total weight of edges having weight at most~\(2w_ny_n\) is at most
\begin{equation}\label{j_bad_one}
\sum_{j=1}^{w_n} \frac{C_1n}{j{^s}} \cdot 2(j+1)y_n \leq 4C_1ny_n \sum_{j \geq 1} \frac{1}{j^{s-1}} \leq C_3ny_n,
\end{equation}
for some constant~\(C_3 > 0.\)

If~\(j > w_n,\) then the weight of a~\(j-\)bad edge is at least~\(2w_ny_n\) and we define such an edge to be \emph{heavy}. Again using the scaling relation~(\ref{dilpax}) with~\(a = w_n\) we get that
\begin{align}
\mathbb{P}_{nei}(\text{edge }f \text{ is heavy}) &\leq \frac{C(\log{n})^2}{n} \mathbb{P}_{nei}(W(f) \geq 2y_n) \nonumber\\
&= \frac{C(\log{n})^2}{n} \mathbb{P}_{nei}\left(W(f)  \geq 2H(nr_n^2p_n)\right) \nonumber\\
&\leq \frac{C(\log{n})^2}{n} \cdot \frac{1}{nr_n^2p_n}
\end{align}
by definition of~\(H(.)\) in~(\ref{h_def})).

Since~\(E_{nei}\) occurs the total number of edges in~\(G\) is at most~\(8\epsilon_2n^2r_n^2p_n\) (see discussion following~(\ref{betty_gilpin})) and the number~\(N_{heavy}\) of heavy edges is stochastically dominated from above by a Binomial random variable~\(Z\) with parameters~\(8\epsilon_2n^2r_n^2p_n\) and~\(\frac{C(\log{n})^2}{n^2r_n^2p_n}.\) Consequently,
\[\mathbb{E}e^{Z} = \left(1+\frac{C(\log{n})^2(e-1)}{n^2r_n^2p_n}\right)^{8\epsilon_2n^2r_n^2p_n} \leq e^{C_4(\log{n})^2}\]
for some constant~\(C_4 > 0\) and so applying the  Chernoff bound we get that
\begin{align}
\mathbb{P}_{nei}(N_{heavy} \geq 2(\log{n})^3) &\leq \mathbb{P}(Z \geq 2(\log{n})^3) \nonumber\\
&\leq e^{-2(\log{n})^3} \mathbb{E}e^{Z} \nonumber\\
&\leq e^{-2(\log{n})^3} e^{C_4(\log{n})^2} \nonumber\\
&\leq e^{-(\log{n})^3} \label{heavy_est}
\end{align}
for all~\(n\) large.

Thus with high probability the number of heavy edges is at most~\(2(\log{n})^3.\) To  estimate the total weight of heavy edges, we use one final estimate regarding ``super-heavy" edges. For~\(\epsilon > 0\) small to be determined later, let~\(E_{good}\) be the event that each edge of~\(G\) has weight at most~\(2H((n^2r_n^2p_n)^{1+\epsilon}) =: 2z_n.\) By definition, for any edge~\(f\) we have
\[\mathbb{P}_{nei}\left(W(f) \geq 2z_n\right) \leq \frac{1}{(n^2r_n^2p_n)^{1+\epsilon}}\] and since there are at most~\(8\epsilon_2n^2r_n^2p_n\) edges in~\(G\) (see discussion following~(\ref{betty_gilpin})), we get from the union bound that
\begin{equation}\label{e_good_est}
\mathbb{P}_{nei}(E_{good}) \geq 1- \frac{8\epsilon_2n^2r_n^2p_n}{(n^2r_n^2p_n)^{1+\epsilon}} \geq 1-\frac{C_5}{(n^2r_n^2p_n)^{\epsilon}}
\end{equation}
for some constant~\(C_5 > 0.\)

Recalling the event~\(E_{tot}\) in the proof of part~\((a)\) above and defining~\[E_{net} := E_{tot} \cap E_{low} \bigcap \{N_{heavy} \leq 2(\log{n})^3\} \bigcap E_{good},\] we get from the respective estimates~(\ref{e_tot_est}),~(\ref{e_low_est}),~(\ref{heavy_est}) and~(\ref{e_good_est}) that
\begin{align}
\mathbb{P}_{nei}(E_{net}) &\geq 1 - e^{-C_3nr_n^2p_n} - e^{-C_2(\log{n})^2}- e^{-(\log{n})^3} - \frac{C_5}{(n^2r_n^2p_n)^{\epsilon}} \nonumber\\
&\geq 1- \frac{2C_5}{(n^2r_n^2p_n)^{\epsilon}} \nonumber
\end{align}
for all~\(n\) large, since~\(nr_n^2p_n \geq M\log{n}\) by Theorem statement. Combining with the estimate for~\(E_{nei}\) in~(\ref{e_nei_est}), we get that
\begin{align}
\mathbb{P}(E_{nei} \cap E_{net}) &\geq 1- \frac{2C_5}{(n^2r_n^2p_n)^{\epsilon}} - e^{-C_1nr_n^2p_n} \nonumber\\
&\geq 1-\frac{3C_5}{(n^2r_n^2p_n)^{\epsilon}}
\end{align}
again since~\(nr_n^2p_n \geq M\log{n}\) by Theorem statement.

Suppose~\(E_{nei} \cap E_{net}\) occurs so that the graph~\(G\) is connected (see discussion prior to~(\ref{neha_tits})) and let~\({\cal T}_n\) be the maximum weight of a spanning tree of~\(G.\) The total weight of edges in~\({\cal T}_n\) each having weight at most~\(2y_n\) is at most~\(2ny_n.\) Since~\(E_{low}\) occurs, the total weight of edges having weight at most~\(2w_ny_n\) is at most~\(C_3ny_n\) (see discussion prior to~(\ref{j_bad_one})). Finally,  the number of heavy edges is at most~\( 2(\log{n})^3\) and since~\(E_{good}\) occurs, the weight of any edge in~\(G\) is at most~\(2z_n.\) Thus the total weight of heavy edges is at most~\(4(\log{n})^3z_n.\) Combining we see that the total weight of~\({\cal T}_n\) is at most~\((2+C_3)n y_n + 4(\log{n})^3 z_n.\)

In what follows, we show that~\((\log{n})^3z_n \leq ny_n\) and this would obtain the desired upper deviation estimate for~\(\tau_n\) in the statement of the Theorem. Using the scaling relation~(\ref{dilpax})  we get for any edge~\(f\) that
\[\mathbb{P}\left(W(f) \geq C^{1/s} \cdot (n^2r_n^2p_n)^{(1+\epsilon)/s}\right) \leq \frac{1}{(n^2r_n^2p_n)^{1+\epsilon}}\]
and so~\[z_n = H((n^2r_n^2p_n)^{1+\epsilon}) \leq (n^2r_n^2p_n)^{(1+\epsilon)/s} \leq n^{2(1+\epsilon)/s}.\]

Since~\(s > 2\) strictly by Theorem statement, we can  choose~\(\epsilon > 0\) sufficiently small so that~\(2(1+\epsilon) < s\) and get that
\begin{equation}\label{jelly}
\frac{(\log{n})^3}{n} z_n \leq \frac{(\log{n})^3}{n} \cdot n^{2(1+\epsilon)/s} \longrightarrow 0
\end{equation}
as~\(n \rightarrow \infty.\) On the other hand, using the relation~\(nr_n^2p_n \geq M\log{n}\) from the Theorem statement, we see that~\[\mathbb{P}\left(W(f) \geq 2H(nr_n^2p_n) \right) \leq \frac{1}{nr_n^2p_n} \leq \frac{1}{2} \leq \mathbb{P}\left(W(f) \geq \frac{H(2)}{2}\right)\] for all~\(n\) large and so from~(\ref{jelly}), we then get that~\[2y_n = 2H(nr_n^2p_n) \geq \frac{H(2)}{2} \geq \frac{2(\log{n})^3}{n} \cdot z_n\] for all~\(n\) large. This completes the proof of the Theorem.~\(\qed\)





Recalling the definition of the function~\(H(.)\) in~(\ref{h_def}), we have the following variance bound for the maximum weight of a spanning tree.
\begin{theorem}\label{thm_var_est} Let~\(\epsilon_1,\epsilon_2\) be as in~(\ref{f_eq}) and suppose that~\(nr_n^2p_n \geq M\log{n}\) for some constant~\(M > 0.\) Also suppose that the ccdf scaling condition~(\ref{dilpax}) in Theorem~\ref{thm_mst_max}\((b)\) holds for some constant~\(s > 3\) strictly. There is a constant~\(\gamma> 0\) such that for any~\(M > \gamma,\)~\(L = L(n) >0\) and~\(2 < s_1 < s,\) the variance~\[var(\tau_n) \leq  \gamma n (\sigma^2_{loc} + \sigma^2_{wt}),\] where
\begin{equation}\label{sigma_loc_exp}
\sigma^2_{loc} :=  (nr_n^2p_n)^2 \left(L^2 + n^{4}r_n^2p_n F_c(L) + n^2F_c^{1-2/s_1}(L)\right) + n^{6}e^{-\gamma^{-1}nr_n^2p_n}
\end{equation} is the scaled contribution due to randomness in the vertex locations and \[\sigma^2_{wt} := H^2(nr_n^2p_n)\] is the scaled contribution due to randomness in edge states and weights.
\end{theorem}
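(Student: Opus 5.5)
We use a martingale difference (Efron--Stein type) decomposition. Order the randomness as: first the vertex locations \(X_1,\ldots,X_n\), then, for each vertex \(u\), the collection \(\{(Z(u,v),W(u,v))\}_{v>u}\) of states and weights of the edges to higher-indexed vertices. Writing \({\cal F}_k\) for the corresponding filtrations and \(\tau_n-\mathbb{E}\tau_n=\sum_k D_k\) with orthogonal differences, we get \(var(\tau_n)=\sum_k \mathbb{E}D_k^2\). We split the sum into the location part (\(n\) terms) and the edge part (\(n\) terms). For each part we bound \(\mathbb{E}D_k^2\) by \(\mathbb{E}(\tau_n-\tau_n^{(k)})^2\), where \(\tau_n^{(k)}\) is computed with the \(k\)th block resampled independently. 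A symmetric version of this bound is \(\mathbb{E}(\tau_n-\tilde\tau_n)^2\leq 2\mathbb{E}(\tau_n-\tau_n(\{k\}))^2+2\mathbb{E}(\tilde\tau_n-\tau_n(\{k\}))^2\). Here \(\tau_n(\{k\})\) is the maximum spanning tree weight of \(G(\{k\})\), the graph with vertex \(k\) removed. This explains why the proof of Theorem~\ref{thm_mst_max}\((a)\) established the event \(E_{tot}({\cal V})\) for vertex sets \({\cal V}\) of constant size. It then suffices to show \(\mathbb{E}(\tau_n-\tau_n(\{k\}))^2\lesssim \sigma^2_{loc}\) (respectively \(\sigma^2_{wt}\)).

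\emph{Removal of one vertex.} On \(E_{tot}(\emptyset)\cap E_{tot}(\{k\})\cap E_{nei}\), both \(G\) and \(G(\{k\})\) are connected. Adding \(k\) back to a spanning tree of \(G(\{k\})\) via its heaviest edge gives \(\tau_n\geq \tau_n(\{k\})\). Conversely, delete \(k\) from the maximum tree of \(G\); this leaves at most \(\deg(k)\leq 8\epsilon_2 nr_n^2p_n\) subtrees. Because \(G(\{k\})\) is connected, these can be rejoined with at most \(\deg(k)\) edges of nonnegative weight. Hence
\[0\leq \tau_n-\tau_n(\{k\})\leq \sum_{v\in{\cal N}(k)} W(k,v),\]
and the right side is the sum of at most \(8\epsilon_2nr_n^2p_n\) weights.

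\emph{Edge part.} Resampling the edges at vertex \(u\) affects only edges incident to \(u\). Here I would refine the bound rather than use the crude degree bound. In the maximum spanning tree, the edges at \(u\) are essentially the heaviest edges incident to \(u\), and by the binning argument of part \((b)\) (the estimate leading to (\ref{e_nei_bad})) these are typically only \(O(1)\) in number, with weights of order \(y_n=H(nr_n^2p_n)\). Under (\ref{dilpax}) with \(s>2\), the second moment of the maximum of \(\sim nr_n^2p_n\) weights is \(O(y_n^2)\). This yields \(\mathbb{E}D_k^2\lesssim H^2(nr_n^2p_n)=\sigma^2_{wt}\).

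\emph{Location part.} Moving \(X_k\) changes the neighbourhood of \(k\) entirely, so the difference is controlled only by the crude bound \(\left(\sum_{v\in{\cal N}(k)}W(k,v)\right)^2\). We truncate at level \(L\).
\begin{enumerate}
\item The part with all weights \(\leq L\) contributes \((nr_n^2p_n)^2L^2\).
\item The part with some weight \(>L\) is handled by Cauchy--Schwarz and H\"older, using the moment of order \(s_1<s\), which is finite by (\ref{dilpax}). This gives the terms \((nr_n^2p_n)^2 n^4r_n^2p_nF_c(L)\) and \((nr_n^2p_n)^2n^2F_c^{1-2/s_1}(L)\). The powers of \(n\) come from counting edges up to \(n^2\) and from the trivial bound \(\tau_n\leq\sum_{f\in K_n}W(f)\).
\item On the complement of \(E_{tot}\cap E_{nei}\), we use the trivial bound, whose second moment is \(O(n^4)\) (with \(n^6\) after absorbing the counts), multiplied by \(e^{-\gamma^{-1}nr_n^2p_n}\).
\end{enumerate}
Summing over \(k\) gives the factor \(n\). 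The assumption \(s>3\) is presumably needed so that the relevant second moments converge; I would check exactly where it enters.

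The main obstacle is the edge part: obtaining the sharp \(H^2\) bound instead of the crude \((nr_n^2p_n)^2H^2\). My first attempt would be to show that, on a high-probability event, the maximum spanning tree's edges at a vertex \(u\) whose weights exceed \(2jy_n\) are few. I would then use an exchange argument: replacing the resampled edges at \(u\) costs at most the top few weights at \(u\), plus error terms controlled as in part \((b)\).
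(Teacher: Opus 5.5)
Your location part is essentially the paper's Step 1. You compare through the vertex-deleted graph $G(\{k\})$, use $0\le\tau_n-\tau_n(\{k\})\le$ (sum of weights at $k$) on $E_{tot}(\emptyset)\cap E_{tot}(\{k\})\cap E_{nei}$, truncate at $L$, and use crude bounds via $\mathbb{E}m_G^2,\mathbb{E}m_G^3$ off the good events. That part is fine.

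\textbf{The edge part has a genuine gap, at exactly the point you flag as the obstacle.} Grouping the edge variables into vertex blocks, and then comparing through $\tau_n(\{k\})$, forces you to bound $\mathbb{E}(\sum_{e\in{\cal T}_n,\,e\ni u}W(e))^2$. This quantity is quadratic in the degree of $u$ in the maximum spanning tree ${\cal T}_n$, and nothing available controls that degree. The binning estimate~(\ref{e_nei_bad}) counts $j$-bad edges globally; at a fixed $u$ it gives only $O(1)$ expected edges of weight $\geq 2y_n$. It says nothing about how many tree edges at $u$ have weight below $2y_n$, and ``typically $O(1)$'' is not enough for a second-moment bound. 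Your heuristic that the tree edges at $u$ are ``essentially the heaviest ones'' also fails: only the single heaviest edge at $u$ is guaranteed to lie in ${\cal T}_n$. So your claim $\mathbb{E}D_k^2\lesssim H^2(nr_n^2p_n)$ is unproved. Making it rigorous would need a separate tail bound for tree degrees, which the paper's tools do not provide.

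\textbf{The missing idea is to resample the edges one at a time.} Use the filtration over $f_1,\ldots,f_m$; since martingale increments are orthogonal, refining the filtration costs nothing. Compare through $\tau_{rem}(f)$, the graph with only the edge $f$ deleted, not through a vertex-deleted graph. On $E_{nice}=E_{con}(\emptyset)\cap E_{con}(\{u,v\})\cap E_{nei}$ the graph $G$ minus $f$ stays connected. The exchange argument then gives
$0\le\tau_n-\tau_{rem}(f)\le W(f)\ind(f\in{\cal T}_n)$:
removing $f$ from ${\cal T}_n$ and reconnecting with a nonnegative-weight edge loses at most $W(f)$. Square this and sum over all $m$ edges by symmetry. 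The whole edge contribution becomes $\mathbb{E}\rho_n$ with $\rho_n=\sum_{f\in{\cal T}_n}W^2(f)$, and no tree-degree information is needed. Binning as in part $(b)$ gives $\rho_n\le 4ny_n^2+16y_n^2\sum_{j\geq1}j^2N_{bad}(j)$. Combined with $\mathbb{E}_{nei}N_{bad}(j)\le 8C\epsilon_2 n j^{-s}$, this yields $\mathbb{E}\rho_n\lesssim nH^2(nr_n^2p_n)$. This is exactly where $s>3$ enters: it makes $\sum_j j^{2-s}$ finite. The off-event contribution $I_{wt,2}\le D_1n^4e^{-Dnr_n^2p_n}$, multiplied by the $m$ edges, is absorbed by taking $M$ large, since $H(nr_n^2p_n)$ is bounded away from zero.
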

We have obtained the above variance bound under the stronger condition that~\(nr_n^2p_n^2\) is at least of the order of~\(\log{n}\) (as opposed to the bounds in Theorem~\ref{thm_mst_max} that hold under the weaker condition~\(nr_n^2p_n \geq M\log{n}\)). Also, we have expressed~\(\sigma_{loc}^2\) in terms of the parameter~\(L\) that could be chosen according to convenience and we illustrate this aspect in our examples below.

Before we do so, we have the following remark. If suppose we choose~\(L \geq \epsilon\) a constant, then the first term~\((nr_n^2p_n)^2L^2\) in~(\ref{sigma_loc_exp}) is at least~\((nr_n^2p_n)^2\epsilon^2.\) On the other hand because of the ccdf scaling relation~(\ref{dilpax}), we know (see discussion following power law example in Theorem~\ref{thm_mst_max}) that~\(H(z) \leq D z^{1/s}\) for some constant~\(D > 0\) and all~\(z\) large. Since~\(nr_n^2p_n\) is at least of the order of~\(\log{n},\) this implies that~\(\sigma^2_{wt}\) is always much smaller than~\(\sigma_{loc}^2.\) However, in Theorem~\ref{thm_var_est} above, we have provided the scaled variance contributions due to random vertex locations and random weights/states  separately for clarity and in fact from the proof  below, we see that if the vertex locations are \emph{fixed}, then the variance of~\(\tau_n\) is simply upper bounded by a constant multiple of~\(n\sigma_{wt}^2.\)


As before, we  consider the two examples described following Theorem~\ref{thm_mst_max}.

\underline{\emph{Power Law}}: Recalling the notation in the paragraph containing~(\ref{tau_n_power_law}), we see that~\(F_c(L) \leq \frac{A_2}{L^s}\) and that~\(H(nr_n^2p_n) \leq B_2 (nr_n^2p_n)^{1/s}\) for constants~\(A_2,B_2 >0.\) Therefore using~\(r_n,p_n \leq 1\) and choosing~\(3 < s_1 < s,\) we get from~(\ref{sigma_loc_exp}) that
\[\sigma_{loc}^2 \leq (nr_n^2p_n)^2 \left(L^2 + \frac{n^4}{L^{s}} + \frac{n^2}{L^{s/3}} \right) + n^{6}e^{-\gamma^{-1}nr_n^2p_n}.\]
Setting~\(L = n^{\max\left(\frac{4}{2+s},\frac{6}{6+s}\right)},\) we see that~\(\frac{n^{4}}{L^s} \leq L^2\) and~\(\frac{n^2}{L^{s/3}} \leq L^2\) and so~\(\sigma_{loc}^2 \leq 4 (nr_n^2p_nL)^2\) for all~\(n\) large. Since~\(L \geq 1\) the discussion above implies that~\(\sigma_{loc}^2\) is much larger than~\(\sigma_{wt}^2\) and so \[var(\tau_n) \leq Dn(\sigma_{loc}^2 + \sigma_{wt}^2) \leq 2Dn\sigma_{loc}^2 \leq 8Dn \cdot (nr_n^2p_nL)^2\] for all~\(n\) large and some constant~\(D > 0.\) On the other hand from~(\ref{tau_n_power_law}), we see that~\(\mathbb{E}\tau_n \geq D_1 n \left(nr_n^2p_n\right)^{1/s}\) for some constant~\( D_1 > 0\) and so
\begin{equation}\label{l2_conv_power}
\mathbb{E}\left(\frac{\tau_n}{\mathbb{E}\tau_n}-1\right)^2 \leq \frac{D_2}{n} \cdot (nr_n^2p_n)^{2-2/s} \cdot L^2
\end{equation}
for some constant~\(D_2 > 0.\)

Suppose now that~\(r_n = \frac{1}{n^{\theta}},p_n = \frac{1}{n^{\beta}}\) for some constants~\(0 < \theta,\beta < 1\) satisfying~\(2\theta+2\beta < 1\) so that the condition~\(nr_n^2p^2_n \geq M\log{n}\) is satisfied. From~(\ref{l2_conv_power}), we see that if~\((nr_n^2p_n)^{2-2/s}L^2\) is much smaller than~\(n\) or equivalently if
\[(1-2\theta-\beta)\left(2-\frac{2}{s}\right) + 2\max\left(\frac{4}{2+s},\frac{6}{6+s}\right) < 1\] strictly, then~\(\frac{\tau_n}{\mathbb{E}\tau_n} \longrightarrow 1\) in~\(L^2\) as~\(n \rightarrow \infty.\) In words, if~\(G\) is sparse enough (i.e.,~\(\theta\) close to~\(\frac{1}{2}\) or~\(\beta\) close   to~\(1\)) and the edge weights are light enough (i.e.,~\(s\) is large), then we are guaranteed~\(L^2-\)convergence of the maximum weight of a spanning tree, appropriately scaled and centred.

\underline{\emph{Exponential}}: Recalling the notation in the paragraph containing~(\ref{tau_n_exponential}), we see that~\(F_c(L) \leq e^{-A_2L^{\alpha}}\) and that~\(H(nr_n^2p_n) \leq B_2 \left(\log(nr_n^2p_n)\right)^{1/\alpha}\) for constants~\(A_2,B_2 >0.\) Therefore again using~\(r_n,p_n \leq 1\) and choosing~\(L = (\log{n})^{\zeta}\) for some large enough constant~\(\zeta > 0,\) we get from~(\ref{sigma_loc_exp}) that
\begin{align}
var(\tau_n) &\leq Dn(nr_n^2p_n)^2 \left((\log{n})^{2\zeta} + n^4e^{-D_1(\log{n})^2} + n^2e^{-D_1(\log{n})^2}\right) \nonumber\\
&\leq 2Dn(nr_n^2p_n)^{2} \cdot (\log{n})^{2\zeta} \label{tau_expo_est}
\end{align}
for some constants~\(D,D_1 > 0\) and all~\(n\) large.

From~(\ref{tau_n_exponential}), we know that~\(\mathbb{E}\tau_n \geq D_2 n \left(\log(nr_n^2p_n)\right)^{1/\alpha} \geq D_2n\) provided~\(nr_n^2p_n \geq M\log{n}\) for large enough~\(M\) and so we get from~(\ref{tau_expo_est}) that
\begin{equation}\label{l2_conv_expo}
\mathbb{E}\left(\frac{\tau_n}{\mathbb{E}\tau_n}-1\right)^2 \leq \frac{D_3}{n} \cdot (nr_n^2p_n)^{2} \cdot (\log{n})^{2\zeta}
\end{equation}
for some constant~\(D_3 > 0.\) As before, suppose now that~\(r_n = \frac{1}{n^{\theta}}\) and~\(p_n = \frac{1}{n^{\beta}}\) where~\(0 < 2\theta + 2\beta < 1\) so that the stronger condition~\(nr_n^2p_n^2 \geq M\log{n}\) is satisfied. If in addition~\(nr_n^2p_n\) is much smaller than~\(\sqrt{n}\) or equivalently if~\(\theta\) satisfies~\[\frac{1}{2}\left(\frac{1}{2}-\beta\right) < \theta < \frac{1}{2}-\beta\] then from~(\ref{l2_conv_expo}), we obtain~\(L^2-\)convergence of~\(\frac{\tau_n}{\mathbb{E}\tau_n}.\)

\emph{Proof of Theorem~\ref{thm_var_est}}: For~\(1 \leq j \leq n\) let~\({\cal F}_j\) be the sigma-field generated by the vertex locations~\(\{X_l\}_{1 \leq l \leq j}.\) We also  deterministically order the edges of~\(K_n\) as~\(f_1,\ldots,f_m, m = {n \choose 2}\) and for~\(n+1 \leq j \leq n+m\) we let~\({\cal F}_j\) be the sigma-field generated by~\(\{X_l\}_{1 \leq l \leq n} \bigcup \bigcup_{l=1}^{j-n}\{(Z(f_l),W(f_l))\}\) where we recall that~\(Z(f_l)\) and~\(W(f_l)\) respectively denote the state and weight of the edge~\(f_l.\)

From the martingale difference property we then get
\begin{equation}\label{var_est_am}
var(\tau_n) = \sum_{j=1}^{n+m} \mathbb{E}\left(\mathbb{E}\left(\tau_n \mid {\cal F}_j\right) - \mathbb{E}\left(\tau_n \mid {\cal F}_{j-1}\right)\right)^2
\end{equation}
and we rewrite the above expression in a more convenient form as follows: For~\(1 \leq j \leq n\) we define~\(\tau_n^{(j)}\) as the maximum weight of a spanning tree of the largest component in the random graph obtained by replacing the vertex location~\(X_j\) with an independent copy~\(X_j^{(c)}.\) Similarly for~\(n+1 \leq j \leq n+m,\) the term~\(\tau_n^{(j)}\) is the maximum weight of a spanning tree of the largest component in the random graph obtained by replacing the state and weight~\((Z(f_{j-n}),W(f_{j-n}))\) of the edge~\(f_{j-n}\) with an independent copy~\((Z^{(c)}(f_{j-n}),W^{(c)}(f_{j-n})).\)

With the above notations, we get
\begin{align}
\left(\mathbb{E}(\tau_n \mid {\cal F}_j) - \mathbb{E}(\tau_n \mid {\cal F}_{j-1})\right)^2 &= \left(\mathbb{E}(\tau_n-\tau_n^{(j)} \mid {\cal F}_j)\right)^2 \nonumber\\
&\leq \mathbb{E}\left((\tau_n - \tau^{(j)}_n)^2 \mid{\cal F}_j\right) \nonumber
\end{align}
by the Jensen's inequality for conditional expectations. Plugging this into~(\ref{var_est_am}), we then get
\begin{align}
var(\tau_n) &\leq \sum_{j=1}^{n+m} \mathbb{E}\left(\tau_n-\tau_n^{(j)}\right)^2 \nonumber\\
&= \sum_{j=1}^{n} \mathbb{E}\left(\tau_n - \tau_n^{(j)}\right)^2 + \sum_{j=n+1}^{n+m} \mathbb{E}\left(\tau_n - \tau_n^{(j)}\right)^2 \nonumber\\
&= n I_{loc} + m I_{wt} \label{var_est_am_two}
\end{align}
by symmetry, where~\(I_{loc} := \mathbb{E}\left(\tau_n - \tau_n^{(1)}\right)^2\) and~\(I_{wt} := \mathbb{E}\left(\tau_n - \tau_n^{(1)}\right)^2\) respectively could be interpreted as the respective scaled contributions due to randomness in vertex locations and the edge states and weights.

We now proceed in two steps: In the first step, we estimate~\(I_{loc}\) and in the second step we bound~\(I_{wt}.\) Finally, we combine the two expressions obtained to establish the variance bound in the statement of the Theorem.

\emph{\underline{Step 1}}: To evaluate~\(I_{loc},\) we recall from the proof of Theorem~\ref{thm_mst_max}\((a),\) the subgraph~\(G({\cal V}) \subset G\) obtained after removing the vertices in~\({\cal V}.\) Let~\(\tau_{rem}\) be the maximum weight of the spanning tree of the largest component of the graph~\(G(\{1\})\) obtained after removing the vertex~\(1.\) By the triangle inequality we have
\[|\tau_n-\tau_n^{(1)}| \leq |\tau_n-\tau_{rem}| + |\tau_{rem}-\tau_{n}^{(1)}|\] and so using~\((a+b)^2 \leq 2(a^2+ b^2),\) we get
\begin{align}
I_{loc} &= \mathbb{E}\left(\tau_n-\tau_n^{(1)}\right)^2  \nonumber\\
&\leq 2\mathbb{E}\left(\tau_n-\tau_{rem}\right)^2 + 2\mathbb{E}\left(\tau_{rem}-\tau_n^{(1)}\right)^2 \nonumber\\
&= 4 \mathbb{E}\left(\tau_n - \tau_{rem}\right)^2, \label{i_loc_est_one}
\end{align}
by symmetry.

We estimate~\(\tau_n-\tau_{rem}\) as follows. For~\(L > 0\) we first let~\(E_{wt}(L)\) be the event that each edge of~\(G\) has weight at most~\(L.\) Next, as in Figure~\ref{fig_squares}, we divide the unit square~\(S\) into small~\(\frac{r_n}{4} \times \frac{r_n}{4}\) squares~\(\{R_i\}_{1 \leq i \leq N}\) where~\(N = \frac{16}{r_n^2}\) is assumed to be an integer (see the discussion in the first paragraph of the proof of Theorem~\ref{thm_mst_max}\((a)\)). Recalling the events~\(E_{tot}({\cal V})\) and~\(E_{nei}\) defined in the proof of Theorems~\ref{thm_mst_max}\((a)\) and~\((b),\) respectively, we assume henceforth that~\[E_{all} := E_{wt}(L) \cap E_{tot}(\{1\}) \cap E_{tot}(\emptyset) \cap E_{nei}\] occurs so that:\\
\((i)\) Both the graphs~\(G(\{1\})\) and~\(G\) are connected (see discussion following~(\ref{e_tot_est}) in the proof of Theorem~\ref{thm_mst_max}\((a)\)),\\
\((ii)\) Each vertex has at most~\(Cnr_n^2p_n\) neighbours in~\(G\) for some constant~\(C > 0\)  (see paragraph containing~(\ref{e_nei_est})) and\\
\((iii)\) Every edge in~\(G\) has weight at most~\(L.\)

Because~\(G(\{1\})\) and~\(G\) are both connected, any spanning tree of~\(G(\{1\})\) can be extended to a spanning tree of~\(G.\) The overall weight of a tree only increases upon adding more edges and so~\(\tau_{rem} \leq \tau_n.\) For the reverse direction, let~\({\cal S}\) be any spanning tree of~\(G\) and let~\(v_1,\ldots,v_t, t \leq Cnr_n^2p_n\) be the neighbours of the vertex~\(1\) in~\(G.\) Removing the vertex~\(1\) from~\(G\) we then get~\(t\) subtrees~\({\cal S}_1,\ldots,{\cal S}_t\) as shown in Figure~\ref{fig_sub_trees}. Here~\(v_1=A,v_2=B\) and~\(v_3=C\) are the neighbours of the vertex~\(1\) in the graph~\(G\) and the subtrees~\({\cal S}_1,{\cal S}_2\) and~\({\cal S}_3\) are denoted by solid triangles.

\begin{figure}[tbp]
\centering
\includegraphics[width=6in, trim= 220 200 50 110, clip=true]{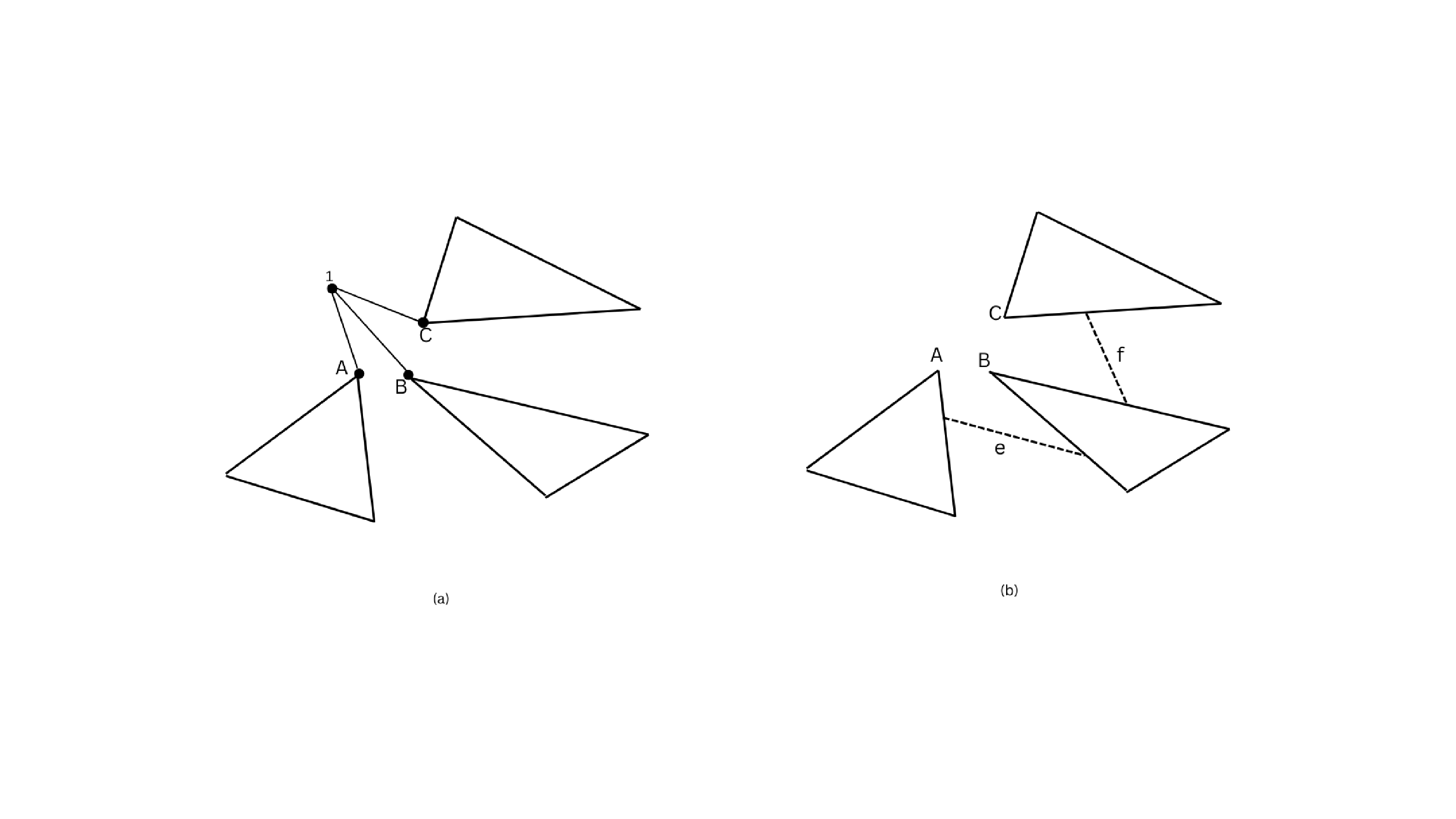}
\caption{The subtrees~\({\cal S}_i, 1 \leq i \leq t=3\) containing the neighbours~\(v_1=A,v_2=B,v_3=C\) of the vertex~\(1\) are shown in~\((a).\) Removing vertex~\(1\) and adding the  edges~\(h_1=e\) and~\(h_2=f\) gives a spanning tree of the graph~\(G(\{1\})\) as shown in~\((b).\)}
\label{fig_sub_trees}
\end{figure}

Since~\(G(\{1\})\) is also connected, we can always find~\(t-1\) edges~\(h_1,\ldots,h_{t-1}\) such that the union~\(\bigcup_{l=1}^{t} \{{\cal S}_l\} \bigcup \bigcup_{l=1}^{t-1} \{h_l\}\) forms a spanning tree~\({\cal S}_{new}\) of~\(G(\{1\})\) (In figure~\ref{fig_sub_trees},~\(h_1=e\) and~\(h_2=f\)). The occurrence of the event~\(E_{wt}(L)\) ensures that the weight of any edge is at most~\(L\) and so the weight~\(W({\cal S}) = \sum_{f \in {\cal S}}W(f)\) of the ``original" tree~\({\cal S}\) satisfies
\begin{align}
W({\cal S}) &\leq W({\cal S}_{new}) + Lt \nonumber\\
&\leq W({\cal S}_{new}) + CLnr_n^2p_n \nonumber\\
&\leq \tau_{rem} + CLnr_n^2p_n. \label{gest_ax}
\end{align}
The estimate~(\ref{gest_ax}) is true for any tree~\({\cal S}\) and so~\(\tau_n \leq \tau_{rem} + CLnr_n^2p_n.\) Combining this with the estimate~\(\tau_{rem} \leq \tau_n\) obtained in the previous paragraph, we get that
\begin{equation}\label{i_loc_one_est}
I_{loc,1} := 4\mathbb{E}\left(\tau_n-\tau_{rem}\right)^2\ind(E_{all}) \leq C^2L^2(nr_n^2p_n)^2.
\end{equation}

It remains to estimate~\[I_{loc,2} := I_{loc}-I_{loc,1} = 4\mathbb{E}\left(\tau_n-\tau_{rem}\right)^2\ind(E^c_{all}).\] Indeed if~\(E^c_{all}\) occurs, then we use the bound~\[\max(\tau_n,\tau_{rem}) \leq \sum_{f \in G}W(f),\] the total weight of all edges in~\(G,\) to get that
\[I_{loc,2} \leq \mathbb{E} \left(\sum_{f \in G} W(f)\right)^2\ind(E^c_{all}).\] The inequality~\((\sum_{i=1}^{l}a_i)^2 \leq l\sum_{i=1}^{l}a_i^2\) for positive~\(\{a_i\}\) and integer~\(l \geq 1\) implies that~\[\left(\sum_{f \in G}W(f)\right)^2 \leq m_G\sum_{f \in G}W^2(f),\] where~\(m_G\) is the number of edges in~\(G\) and moreover,
\[\ind(E^c_{all}) \leq \ind(E_{all,1}^c) + \ind(E^c_{wt}(L)),\]
where~\[E^c_{all,1} := E^c_{tot}(\{1\}) \cup E^c_{tot}(\emptyset) \cup E^c_{nei}.\]
Combining the above we get
\begin{equation}\label{i_loc_22_up}
I_{loc,2} \leq I_{loc,21} + I_{loc,22}
\end{equation}
where
\[I_{loc,21} := \mathbb{E}m_G \sum_{f \in G}W^2(f) \ind(E_{all,1}^c)\]  and
\[I_{loc,22} := \mathbb{E}m_G \sum_{f \in G}W^2(f)\ind(E^c_{wt}(L)).\]

We evaluate~\(I_{loc,21}\) and~\(I_{loc,22}\) in that order below. The events~\(E_{tot}(.)\) and~\(E_{nei}\) are independent of the edge weights and so
\[I_{loc,21} = \mathbb{E}m_G \ind(E_{all,1}^c) \mathbb{E}\left(\sum_{f \in G} W^2(f) \mid G\right).\] Since the edge weights have bounded second moments (see Theorem statement), the term~\[\mathbb{E}\left(\sum_{f \in G} W^2(f) \mid G\right) \leq Dm_G\] for some constant~\(D > 0\) and so
\begin{align}\label{i_loc_est_21}
I_{loc,21} &\leq D \mathbb{E}m_G^2 \ind(E^c_{all,1}) \nonumber\\
&\leq Dn^4\mathbb{P}(E^c_{all,1}) \nonumber\\
&\leq 3Dn^4e^{-D_1nr_n^2p_n}
\end{align}
for some constant~\(D_1 > 0,\) where the second inequality in~(\ref{i_loc_est_21}) follows from the direct bound~\(m_G \leq {n \choose 2} \leq n^2\) and the final estimate in~(\ref{i_loc_est_21}) is obtained from the respective bounds~(\ref{e_tot_est}) and~(\ref{e_nei_est}) for the events~\(E_{tot}(.)\) and~\(E_{nei},\) together with an application of the union bound.

To estimate~\(I_{loc,22},\) we first split
\begin{equation}\label{i_loc_22_split}
I_{loc,22} = \mathbb{E}m_G \sum_{f \in K_n} (T_1(f) + T_2(f)) \ind(f\in G),
\end{equation}
where~\[T_1(f) := \mathbb{E}\left(W^2(f) \ind(W(f) \leq L)\ind(E^c_{wt}(L)) \mid G \right)\]
and
\[T_2(f) := \mathbb{E}\left(W^2(f) \ind(W(f) > L) \ind(E^c_{wt}(L)) \mid G \right).\]
By definition if~\(W(f) \leq L\) and~\(E^c_{wt}(L)\) occurs, then the event~\(A(f)\) that some edge of~\(G\) other than~\(f\) has weight at least~\(L,\) necessarily occurs. In other words~\[\ind(W(f) \leq L) \ind(E^c_{wt}(L)) \leq \ind(A(f))\] and so
\begin{align}
T_1(f) &\leq \mathbb{E}\left(W^2(f) \ind(A(f)) \mid G\right) \nonumber\\
&= \mathbb{E}W^2(f) \mathbb{P}(A(f) \mid G),  \nonumber\\
&\leq \mathbb{E}W^2(f) m_G F_c(L) \nonumber\\
&\leq D_2 m_GF_c(L)\label{gen_ta}
\end{align}
for some constant~\(D_2 >0,\) where the second relation in~(\ref{gen_ta}) is true since~\(W(f)\) is independent of~\(A(f)\) by definition, the third expression in~(\ref{gen_ta}) is obtained from the union bound and the final estimate in~(\ref{gen_ta}) is true since the edge weights have bounded second moment, by Theorem statement.

To upper bound~\(T_2(f),\) we recall from the ccdf scaling relation in the Theorem statement that~\(F_c(ax_0) \leq \frac{C}{a^s} \cdot F_c(x_0)\) for some~\(s > 2,x_0 > 0\) strictly and all~\(a > 1.\) So for~\(2 < s_1 < s\) strictly, we have that
\begin{align}
\mathbb{E}W^{s_1}(f) &\leq \int_{0}^{\infty} y^{s_1-1} F_c(y) dy \nonumber\\
&=  x_0^{s_1} \int_{0}^{\infty} a^{s_1-1} F_c(ax_0) da, \nonumber
\end{align}
by a change of variable~\(y=ax_0.\) Splitting the integral into two terms~\(\int_{0}^{1} + \int_{1}^{\infty}\) we get that
\[\mathbb{E}W^{s_1}(f) \leq D_3 + D_3 \int_{1}^{\infty} \frac{da}{a^{s-s_1+1}} \leq D_4\]
for some constants~\(D_3,D_4 >0.\) Therefore using the H\"olders inequality the term~\(T_2(f)\) is upper bounded as
\begin{align}
T_2(f) &\leq \mathbb{E}\left(W^2(f)\ind(W(f) > L) \mid G\right) \nonumber\\
&= \mathbb{E}W^2(f) \ind(W(f) > L)  \nonumber\\
&\leq \left(\mathbb{E}W^{s_1}(f)\right)^{2/s_1} F_c^{1-2/s_1}(L) \nonumber\\
&\leq D_5  F^{1-2/s_1}_c(L)\label{t_two_f}
\end{align}
for some constant~\(D_5 > 0,\) where the second relation in~(\ref{t_two_f}) is true since the edge weight~\(W(f)\) is independent of~\(G.\)

Plugging~(\ref{gen_ta}) and~(\ref{t_two_f}) into~(\ref{i_loc_22_split}), we then get
\begin{align}
I_{loc,22} &\leq D_6F_c(L) \mathbb{E}m^2_G \sum_{f\in K_n} \ind(f \in G) \nonumber\\
&\;\;\;\;\;\;\;\;\;\;\;\;\;\;+\;\;D_6F_c^{1-2/s_1}(L)\mathbb{E}m_G\sum_{f \in K_n} \ind(f \in G) \nonumber\\
&= D_6F_c(L) \mathbb{E}m^3_G + D_6F_c^{1-2/s_1}(L) \mathbb{E}m^2_G \label{hristova}
\end{align}
for some constant~\(D_6 > 0.\) To estimate moments of~\(m_G,\) the number of edges in~\(G,\) we recall the event~\(E_{nei}\) defined prior to~(\ref{e_nei_est}) which ensures that each vertex has at most~\(D_7 nr_n^2p_n\) neighbours in~\(G,\) for some constant~\(D_7 > 0.\) This ensures that the total number of edges in~\(G\) is at most~\(D_7n^2r_n^2p_n.\) If~\(E_{nei}^c\) occurs, then we have~\(m_G \leq n^2,\) an upper bound on the total number of edges in the complete graph~\(K_n.\)  Using the bound~(\ref{e_nei_est}) for~\(E_{nei},\) we then get
\[\mathbb{E}m_G^3 \leq D_8 (n^2r_n^2p_n)^3 + n^6e^{-D_9nr_n^2p_n}\]
and
\[\mathbb{E}m_G^2 \leq D_8 (n^2r_n^2p_n)^2 + n^4e^{-D_9nr_n^2p_n}\]
for some constants~\(D_8,D_9 > 0.\)

Substituting the above into~(\ref{hristova}) and using the respective estimates~(\ref{i_loc_est_21}) and~(\ref{i_loc_one_est}) for~\(I_{loc,21}\) and~\(I_{loc,1},\) we get
\begin{align} \label{i_loc_est_tot}
I_{loc} &= I_{loc,1} + I_{loc,2} \nonumber\\
&\leq I_{loc,1}+I_{loc,21} + I_{loc,22} \nonumber\\
&\leq D (nr_n^2p_n)^2 \left(L^2 + n^{4}r_n^2p_n F_c(L) + n^2F_c^{1-2/s_1}(L)\right) \nonumber\\
&\;\;\;\;\;\;\;\;\;+\;\;\;Dn^{6}\exp\left(-D_0nr_n^2p_n\right) \nonumber\\
&= D\sigma^2_{loc}
\end{align}
for some constants~\(D,D_0 > 0,\) where~\(\sigma^2_{loc}\) is as in the Theorem statement and the second relation in~(\ref{i_loc_est_tot}) follows from~(\ref{i_loc_22_split}). Thus~\(nI_{loc} \leq Dn\sigma^2_{loc}\) and this obtains the variance contribution due to the randomness in vertex locations (see~\ref{i_loc_est_one}).

\emph{\underline{Step 2}}: It remains to estimate the variance contribution~\[I_{wt} = \mathbb{E}\left(\tau_n-\tau_n^{(1)}\right)^2\] due to edge states and weights in~(\ref{i_loc_est_one}). For convenience, we recall that~\(f_1,\ldots,f_m, m = {n \choose 2}\) is a deterministic ordering of edges of~\(K_n\) and that~\(\tau_n^{(1)}\) is the maximum weight of a spanning tree of the ``new" random graph~\(G^{(1)}\) obtained by replacing the edge state and weight~\((Z(f_1),W(f_1))\) of the edge~\(f_1\) in~\(G,\) by an independent copy~\((Z^{(c)}(f_1),W^{(c)}(f_1)).\)

Let~\(f_1 = (u_1,v_1)\) have~\(u_1\) and~\(v_1\) as endvertices and let~\(G_{rem}(f_1)\) be the subgraph of~\(G\) obtained by removing only the edge~\(f_1\) from~\(G.\) Letting~\(\tau_{rem}(f_1)\) be the maximum weight of a spanning tree of the largest component of~\(G_{rem}(f_1),\) we get by an application of the triangle inequality that
\[|\tau_n - \tau_n^{(1)}| \leq |\tau_n - \tau_{rem}(f_1)| + |\tau_{rem}(f_1)-\tau_n^{(1)}|\] and so squaring, taking expectations and using~\((a+b)^2 \leq 2(a^2+b^2),\) we get that
\begin{align}
I_{wt} &\leq 2\left(\mathbb{E}\left(\tau_n - \tau_{rem}(f_1)\right)^2 + \mathbb{E}\left(\tau_{rem}(f_1) - \tau_n^{(1)}\right)^2\right) \nonumber\\
&= 4\mathbb{E}\left(\tau_n-\tau_{rem}(f_1)\right)^2. \label{i_wt_appax}
\end{align}

We estimate the difference~\(\tau_n-\tau_{rem}(f_1)\) as follows. From the proof of Step~\(1\) above, we recall that~\(G(\{u_1,v_1\})\) is the graph obtained by removing the \emph{vertices}~\(u_1\) and~\(v_1.\) Recalling that~\(E_{con}(\{u_1,v_1\})\) is the event that~\(G(\{u_1,v_1\})\) is connected, we get from~(\ref{e_con_est}) that
\begin{equation}\label{madai_ax}
\mathbb{P}(E_{con}(\{u_1,v_1\})) \geq 1-\exp\left(-Dnr_n^2p_n\right)
\end{equation}
for some constant~\(D > 0,\) not depending on the choice of~\(\{u_1,v_1\}.\) We also recall the event~\(E_{nei}\) defined prior to~(\ref{e_nei_est}) that ensures that the degree of each vertex in~\(G\) is at least of the order of~\(nr_n^2p_n.\) Defining
\[E_{nice} := E_{con}\left(\emptyset\right) \bigcap E_{con}(\{u_1,v_1\}) \bigcap E_{nei}\] and choosing the constant~\(D > 0\) in~(\ref{madai_ax}) smaller if necessary, we invoke the union bound and get from the corresponding probability estimates~(\ref{e_con_est}),~(\ref{madai_ax}) and~(\ref{e_nei_est}) that
\begin{equation}\label{e_nice_est}
\mathbb{P}(E_{nice}) \geq 1-3\exp\left(-Dnr_n^2p_n\right).
\end{equation}

We now split the upper bound~(\ref{i_wt_appax}) for~\(I_{wt}\) as
\begin{equation}\label{i_wt_split_ax}
I_{wt} \leq 4I_{wt,1} + 4I_{wt,2},
\end{equation}
where
\[I_{wt,1} := \mathbb{E}\left(\tau_n-\tau_{rem}(f_1)\right)^2\ind(E_{nice})\] and
\[I_{wt,2} := \mathbb{E}\left(\tau_n-\tau_{rem}(f_1)\right)^2\ind(E^c_{nice}).\] In what follows, we estimate~\(I_{wt,2}\) and~\(I_{wt,1}\) in that order below.

If~\(E_{nice}^c\) occurs, then we use the direct bound~\(\tau_n \leq \sum_{f \in K_n} W(f),\) the sum of weights of all edges in the complete graph~\(K_n.\) The same estimate holds for~\(\tau_{rem}(f_1)\) as well and so we get
\[|\tau_n - \tau_{rem}(f_1)| \leq \sum_{f \in K_n} W(f).\] Consequently
\begin{align}
I_{wt,2} &= \mathbb{E}\left(\tau_n - \tau_{rem}(f_1)\right)^2 \ind(E_{nice}^c) \nonumber\\
&\leq \mathbb{E}\left(\sum_{f \in K_n} W(f)\right)^2 \ind(E_{nice}^c) \nonumber\\
&= \mathbb{E}\left(\sum_{f \in K_n}W(f)\right)^2 \mathbb{P}\left(E_{nice}^c\right), \label{sakiye}
\end{align}
since~\(E_{nice}\) depends only on the vertex locations and edge states and is therefore independent of edge weights.

Using~\(\left(\sum_{i=1}^{l}a_i\right)^2 \leq l \sum_{i=1}^{l}a_i^2\) and recalling that there are~\(m={n \choose 2}\) edges in~\(K_n,\) we get that
\begin{align}
\mathbb{E}\left(\sum_{f \in K_n} W(f)\right)^2 &\leq m\sum_{f \in K_n} \mathbb{E}W^2(f)  \nonumber\\
&\leq D_1m^2 \nonumber\\
&= D_1n^4, \nonumber
\end{align}
for some constant~\(D_1> 0,\) since the edge weights have bounded second moments by Theorem statement. Plugging this into~(\ref{sakiye}) and using the estimate~(\ref{e_nice_est}) for~\(E_{nice}\) we get that
\begin{equation}\label{i_wt_two_est}
I_{wt,2} \leq D_1 n^4 e^{-Dnr_n^2p_n} \leq e^{-D_2 nr_n^2p_n}
\end{equation}
for some constant~\(D_2 > 0,\) provided~\(nr_n^2p_n \geq M\log{n}\) for large enough constant~\(M.\) This obtains an upper bound for~\(I_{wt,2}.\)

To estimate~\(I_{wt,1},\) we assume henceforth that~\(E_{nice}\) occurs so that both~\(G\) and~\(G(\{u_1,v_1\})\) are connected. Because~\(E_{nei}\) also occurs both~\(u_1\) and~\(v_1\) are adjacent to at least~\(D_0 nr_n^2p_n\) vertices in~\(G.\) Therefore the connectivity of~\(G(\{u_1,v_1\})\) ensures that~\(G_{rem}(f_1)\) is connected as well and we let~\({\cal T}_n\) and~\({\cal T}_{rem}(f_1)\) be the maximum weight spanning trees of~\(G\) and~\(G_{rem}(f_1),\) respectively. Clearly, any spanning tree of~\(G_{rem}(f_1)\) is also a spanning tree of~\(G\) and so~\({\cal T}_{rem}(f_1)\) has weight at most~\(\tau_n;\) i.e.,
\begin{equation}\label{tim_one}
\tau_{rem}(f_1) \leq \tau_n.
\end{equation}

For the reverse direction, we see that~\(\tau_{rem}(f_1) < \tau_n\) only if~\(f_1 \in {\cal T}_n.\) If we remove~\(f_1\) from~\({\cal T}_n,\) then we get two subtrees~\({\cal R}_a\) and~\({\cal R}_b\) of~\({\cal T}_n,\) that are also trees in~\(G_{rem}(f_1).\) Since~\(G_{rem}(f_1)\) is connected, there must exist an edge~\(h_{ab} \in G_{rem}(f_1)\) such that the union~\[{\cal T}_{ab} := {\cal R}_a \cup \{h_{ab}\} \cup {\cal R}_b\] is connected (see Figure~\ref{fig_sub_trees}) and accompanying discussion. The tree~\({\cal T}_{ab}\) is a spanning tree of~\(G_{rem}(f_1)\) and has weight at least~\(\tau_n - W(f_1)\) and so we get that
\begin{equation}\label{tim_two}
\tau_{rem}(f_1) \geq \tau_n - W(f_1).
\end{equation}

Combining~(\ref{tim_two}) with~(\ref{tim_one}), we get that
\[|\tau_n-\tau_{rem}(f_1)| \ind(E_{nice}) \leq W(f_1) \ind(f_1 \in {\cal T}_n)\] and so squaring and taking expectations, we get that
\begin{equation}\label{anegan_ax}
I_{wt,1} \leq \mathbb{E}W^2(f_1)\ind\left(f_1 \in {\cal T}_n\right)= \frac{\mathbb{E}\rho_n}{m},
\end{equation}
where~\[\rho_n := \sum_{f \in K_n} W^2(f)\ind(f \in {\cal T}_n)\] is the sum of \emph{squares} of edge weights in the maximum weight spanning tree~\({\cal T}_n\) and the final estimate in~(\ref{anegan_ax}) follows from symmetry. As defined before,~\(m = {n \choose 2}\) is the number of edges in~\(K_n.\)

Arguing as in~(\ref{tau_later}), we have that
\[\rho_n \leq 4ny_n^2 + 16y_n^2\sum_{j \geq 1}j^2N_{bad}(j)\] where we recall that~\(y_n = H(nr_n^2p_n)\) and~\(N_{bad}(j)\) is the number of~\(j-\)bad edges in~\(G;\) i.e., the number of edges whose weight lies in~\([2jy_n,2(j+1)y_n).\) By an analogous analysis described between~(\ref{e_tau_n_up}) and~(\ref{e_tau_n_ax}) and using the fact that the scaling relation~(\ref{dilpax}) holds for some~\(s > 3\) strictly, we then get~\[\mathbb{E}\rho_n \leq D n H^2(nr_n^2p_n)\] for some constant~\(D  >0,\) provided~\(nr_n^2p_n \geq M\log{n}\) for some large enough constant~\(M  > 0.\) Plugging this into~(\ref{anegan_ax}), we get
\begin{equation}\label{i_one_est_ax}
mI_{wt,1} \leq 4DnH^2(nr_n^2p_n)
\end{equation}

Combining~(\ref{i_one_est_ax}) with the estimate~(\ref{i_wt_two_est}) for~\(I_{wt,2}\) we get from~(\ref{i_wt_split_ax}) that
\[mI_{wt} \leq 4mI_{wt,1} + 4mI_{wt,2} \leq 16DnH^2(nr_n^2p_n) + 4me^{-D_2nr_n^2p_n},\] where we recall that~\(m=n^2.\) Arguing as in the discussion following~(\ref{jelly}) in Theorem~\ref{thm_mst_max}, we see that~\(H(nr_n^2p_n)\) is uniformly bounded away from zero and so if~\(nr_n^2p_n^2 \geq M\log{n}\) for a large enough constant~\(M,\) then~\[mI_{wt} \leq 2DnH^2(nr_n^2p_n) = 2Dn\sigma_{wt}^2\] for all~\(n\) large. From~(\ref{i_loc_est_one}), we see that this obtains the desired variance contribution estimate due to randomness in edge states and weights and therefore completes the proof of the Theorem.~\(\qed\)


\subsection*{\em Acknowledgement}
I thank Professors Rahul Roy, Federico Camia, Alberto Gandolfi and C. R. Subramanian for crucial comments and also thank IMSc and IISER Bhopal for my fellowships.

\subsection*{\em Data Availability Statement}
Data sharing not applicable to this article as no datasets were generated or analysed during the current study.

\subsection*{\em Conflict of Interest}
The authors have no conflicts of interest to declare that are relevant to the content of this article. No funding was received to assist with the preparation of this manuscript.

\bibliographystyle{plain}

\begin{thebibliography}{10}
\bibitem{berry} L. Addario-Berry, N. Broutin, C. Goldschmidt, and G. Miermont. (2017).
\newblock{The Scaling Limit of the Minimum Spanning Tree of the Complete Graph}.
\newblock{\em Annals of  Probability}, \textbf{45}, 3075--3144.

\bibitem{aldous} D. Aldous. (1990).
\newblock{ A Random Tree Model Associated with Random Graphs}.
\newblock{\em Random Structures and Algorithms}, \textbf{1}, 383--202.


\bibitem{alon} N. Alon and J. Spencer. (2008).
\newblock{\em The Probabilistic Method}.
\newblock{Wiley}.

\bibitem{boll} B. Bollob\'as. (2001).
\newblock{\em Random Graphs}.
\newblock{Cambridge University Press}.

\bibitem{frieze} A. M. Frieze. (1985).
\newblock{On the Value of a Random Minimum Spanning Tree Problem}.
\newblock{\em Discrete Applied Mathematics}, \textbf{10}, 47--56.


\bibitem{ganesan} G. Ganesan. (2020).
\newblock{Minimum Spanning Trees of Random Geometric Graphs With Location Dependent Edge Weights}.
\newblock{\em Bernoulli}, \textbf{27}, 2473--2493.

\bibitem{goldsmith} A. Goldsmith. (2005).
\newblock{\em Wireless Communications}.
\newblock{Cambridge University Press}.

\bibitem{kes_lee} H. Kesten  and S. Lee. (1996).
\newblock{The Central Limit Theorem for Weighted Minimal Spanning Trees on Random Points}.
\newblock{\em Annals of Applied Probability}, \textbf{6}, 495--527.


\bibitem{penrose} M. Penrose. (2003).
\newblock{\em Random Geometric Graphs}.
\newblock{Oxford University Press}.

\bibitem{penrose2} M. Penrose and J. Yukich. (2003).
\newblock{Weak Laws of Large Numbers in Geometric Probability}.
\newblock{\em  Annals of Applied Probability}, \textbf{13}, 277--303.

\bibitem{steele} J. Steele. (1993).
\newblock{Probability and Problems in Euclidean Combinatorial Optimization}.
\newblock{\em Statistical Science}, \textbf{8}, 48--56.

\bibitem{steele2} J. Steele. (2002).
\newblock{Minimal Spanning Trees for Graphs with Random Edge Lengths}.
\newblock{\em Mathematics and Computer Science II. Algorithms, Trees, Combinatorics and Probabilities}, pp. 223–-245.

\bibitem{yukich} J. E. Yukich. (1998).
\newblock{Probability Theory of Classical Euclidean Optimization Problems}.
\newblock{\em Lecture Notes in Mathematics, Springer}, \textbf{1675}.




\end{thebibliography}

\end{document}